\newtheorem{thm}{Theorem}
\newtheorem{lm}{Lemma}
\newtheorem{cor}{Corollary}
\newtheorem*{thm*}{Theorem}
\newtheorem*{lm*}{Lemma}
\newtheorem*{claim*}{Claim}
\theoremstyle{definition}
\newtheorem{remark}{Remark}
\numberwithin{equation}{section}
\numberwithin{figure}{section}
\newcommand{\R}{\mathbb{R}}
\newcommand{\e}{\varepsilon}
\newcommand{\mb}[1]{\mathbb{#1}}
\newcommand{\mc}[1]{\mathcal{#1}}
\newcommand{\Pb}[1]{\mathbb{P}\left( #1 \right)}
\newcommand{\p}[1]{\mathbb{P}\left( #1 \right)}
\newcommand{\E}{\mathbb{E}}
\newcommand{\dd}{\mathrm{d}}
\newcommand{\1}{\textbf{1}}
\newcommand{\modulo}{\ \textrm{mod} \;}
\newcommand{\fun}[3]{#1\colon #2 \longrightarrow #3}
\DeclareMathOperator{\Cov}{Cov}
\DeclareMathOperator{\id}{id}
\DeclareMathOperator{\sgn}{sgn}
\DeclareMathOperator{\CPE}{CPE}
\DeclareMathOperator{\CUE}{CUE}
\DeclareMathOperator{\GUE}{GUE}
\DeclareMathOperator{\GOE}{GOE}
\title{Tensor Products of Random Unitary Matrices}
\author{
Tomasz Tkocz$^1$\and 
Marek Smaczy\'nski$^2$ \and
Marek Ku\'s $^3$\and 
Ofer Zeitouni$^4$\and Karol \.Zyczkowski$^{2,3}$}
\date{March 16, 2012}
\begin{document}

\maketitle

\noindent 
$^1$Institute of Mathematics, University of Warsaw, Banacha 2, 02-097 Warsaw, Poland. \\ \texttt{tkocz@mimuw.edu.pl} \\ \\
$^2$The Marian Smoluchowski Institute of Physics, Jagiellonian University, Reymonta 4, 30-059 Cracow, Poland. \texttt{marek.smaczynski@uj.edu.pl} \\ \\
$^3$Center of Theoretical Physics, Polish Academy of Sciences, Al. Lotnikow 32/46, 02-668 Warsaw, Poland. \texttt{marek.kus@cft.edu.pl}, \texttt{karol@cft.edu.pl}\\ \\
$^4$School of Mathematics, University of Minnesota and Faculty of Mathematics, Weizmann Institute of Science, POB 26, Rehovot 76100, Israel. \texttt{zeitouni@math.umn.edu}

\begin{abstract}
Tensor products of $M$ random unitary matrices of size $N$
from the circular unitary ensemble are investigated.
We show that the spectral statistics of the tensor product
of random matrices becomes Poissonian if $M=2$, $N$ 
become large or $M$ become large and $N=2$.
\end{abstract}

\noindent {\bf 2010 Mathematics Subject Classification.} 60B20, 15B52.

\noindent {\bf Key words and phrases.} Random matrices, Circular Unitary Ensemble, Tensor product.

\section{Introduction}\label{sec:introduction}

Random matrices proved their usefulness in describing
the spectra of quantum systems, 
the classical analogues of which are chaotic \cite{H06,S99}. 
In particular, 
spectral properties of the
evolution operator of a deterministic quantum chaotic system 
seem to 
coincide with predictions obtained for the circular ensembles 
of random unitary matrices. The symmetry properties of the system 
determine which ensemble of random matrices is applicable.
Specifically,
if the physical system does not possess any time-reversal symmetry, one uses random unitary matrices of the circular unitary ensemble (CUE). 

Statistical predictions obtained for ensembles of random matrices are also 
useful in analyzing generic properties of entangled states \cite{ZS01,HLW06}. 
In the theory of quantum information one deals with composite quantum system 
described in a Hilbert space with a tensor product structure. Thus any 
local unitary dynamics can be represented as a tensor product of 
unitary matrices, each describing time evolution of an individual subsystem.

Consider a quantum system consisting of $M$ subsystems. For simplicity we 
shall assume that each of them is described in $N$ dimensional Hilbert space, 
so that any local unitary dynamics can be written as 
$U=U_1 \otimes \dots \otimes U_M$, where $U_j\in U(N)$ for $j=1,\ldots,
M$. 
If the unitary dynamics of each subsystem is generic, 
the matrices U$_j$ can be represented by random matrices from the CUE.

The main aim of the present work is to analyze properties of the tensor product of random unitary matrices. We show that when either 
$N=2$ in the limit of a large number $M$ of subsystems,
or when $M=2$ in the limit of large subsystem size $N$, 
the point process obtained from the
spectrum of $U$, properly  rescaled,
becomes Poissonian, in the sense that its correlation
functions converge to that of a Poisson process. 

%

This paper is organized as follows. In section \ref{sec:spectral_statistics} we provide
some definitions and introduce our main results, Theorem
\ref{thm.cuecue} and \ref{thm.cue^}, and their corollaries; we also
provide numerical simulations that confirm the results. 
Section \ref{sec:spectral_statistics:two_nxn} provides the proof
of Theorem \ref{thm.cuecue} and of Corollary \ref{cor.cuecue},
while Section \ref{sec:spectral_statistics:M_2x2} is devoted to the proof
of Theorem \ref{thm.cue^} and of Corollary \ref{cor.cue^N}.

\section{Spectral statistics for tensor products of random unitary matrices}\label{sec:spectral_statistics}

The 
spectral statistics for two ensembles of unitary matrices will be the
focal
points of our investigation. 
The first case involves two unitary $N\times N$
matrices, whereas in the second we consider the tensor product of $M$
two-dimensional unitary matrices. As usual, we are interested in spectral
properties in the asymptotic limits of large matrices, i.e., respectively,
$N\to\infty$ and $M\to\infty$.

\subsection{Background and basic definitions}
We recall some standard
definitions and properties of some ensembles
of random unitary matrices.
The simplest situation is
a diagonal unitary matrix with eigenvalues being independently
drawn points on the unit circle. 
Such matrices form the 
\textbf{circular
Poisson ensemble}, $\mathbf{CPE}$ for short. 
The name reflects the fact that
for large matrices the number $n$ of eigenvalues inside an 
interval of the
length $L<<2\pi $ is approximately
Poisson-distributed
\begin{equation*}
p(L,n)\sim \frac{e^{-\lambda L}(\lambda L)^n}{n!}
\end{equation*}
with parameter $\lambda=N/2\pi$. 

Our main interest will be in
unitary matrices of size $N \times N$
drawn according to the Haar measure on the unitary group $U(N)$;
such a matrix is called a matrix from the
$\mathbf{CUE_N}$, where
$\CUE$ stands for \textbf{circular unitary
ensemble}.

Let $A_N$ be a $\CUE_N$ matrix. Denote by $(e^{i\theta^N_j})_{j=1}^N$ its
eigenvalues, where we assume that the eigenphases $\theta^N_j$ 
belong to the
interval $[0, 2\pi)$.  
The random vector $(\theta^N_1, \ldots, \theta^N_N)$
possesses
a density $P_{\CUE_N}$ with respect to the Lebesgue measure, which
was given by Dyson in his seminal paper \cite{Dys},
\begin{equation}\label{eq.cueNdensity}
	P_{\CUE_N}(\theta_1^N, \ldots, \theta_N^N) = C_N\prod_{1 \leq k < l \leq
N} |e^{i\theta^N_k} - e^{i\theta^N_l}|^2.
\end{equation}
This expression can be rewritten in the following form (see Paragraph 11.1 in
\cite{Me04})
\[
	P_{\CUE_N}(\theta_1^N, \ldots, \theta_N^N) = C_N(2\pi)^N\det \left
[S_N(\theta^N_k - \theta^N_l)\right ]_{k,l=1}^N,
\]
where
\begin{equation}\label{eq.kernel}
	S_N(x) = \frac{1}{2\pi}\frac{\sin\frac{Nx}{2}}{\sin\frac{x}{2}}.
\end{equation}
In particular
\[
	S_N(0) = \frac{N}{2\pi}.
\]

The set of eigenphases of a random unitary matrix can be seen as an
example of \textbf{a point process} $\chi_N$ on the interval $[0,2\pi)$
related to these eigenphases, by which we mean a random collection of points
$\{\theta^N_1, \ldots, \theta^N_N\}$ or, in other words, an integer-valued
random measure
\[
	\chi_N(D) = \sum_{k=1}^N \1_{\{\theta^N_k \in D\}}, \qquad D \subset
[0,2\pi),
\]
where $\1_X$ denotes the indicator function of $X$.

A possible way to describe a point process is to give its so-called
\textbf{joint intensities} or, as physicists usually say, \textbf{correlation
functions} $\fun{\rho^N_k}{(\R_+)^k}{\R_+}$, $k=1, 2, \ldots$. In our case
they might be defined simply as
\begin{equation}\label{eq.defint}
	\rho^N_k(x_1, \ldots, x_k) = \lim_{\e \to 0} \frac{1}{(2\e)^k}
\Pb{\exists j_1, \ldots, j_k \ \ |\theta^N_{j_s} - x_s| < \e, \ s = 1,
\ldots, k}.
\end{equation}
It is known \cite{Me04} that the process $\chi_N$ is
determinantal with joint intensities
\begin{equation}\label{eq.intensities}
	\rho^N_k(x_1, \ldots, x_k) = \det \left [S_N(x_s - x_t)\right
]_{s,t=1}^k.
\end{equation}
(Recall that a point process is called {\bf determinantal}
with kernel $K$ if its joint intensities can be written as
$\rho_k(x_1,\ldots,x_k)=\det[K(x_i,x_j)]_{i,j=1}^k$.) 
For $\CUE_N$ matrices,
due to the
translation invariance of the measures we have that
$K_N(x_i,x_j)=K_N(x_i-x_j)$, hence a
kernel is given by a function $K_N(x)$ of a single variable. We refer
to
\cite{AGZ} for more background on such determinantal processes.

By definition, the joint intensity $\rho^N_k$ equals 
$N!/(N-k)!$ times the $k$ dimensional marginal distribution of the vector
$(\theta^N_1, \ldots, \theta^N_N)$. Thus
\begin{equation}\label{eq.intensityintegral}
   \frac{(N-k)!}{N!}\int_{[0,2\pi)^k} \det \left [S_N(x_s - x_t)\right ]_{s,t=1}^k = 1.
\end{equation}

If we rescale properly the eigenphases of a $\CUE_N$ matrix it turns out that they
exhibit nice asymptotic behavior. Namely, it is clear that the point process
$\{\frac{N}{2\pi}\theta^N_1, \ldots, \frac{N}{2\pi}\theta^N_N\}$ is determinantal with the kernel $\frac{2\pi}{N}S_N\left(
\frac{x}{N/2\pi} \right)$.  Thanks to the fact that this function converges
when $N \to \infty$, we can give a precise analytic description of the limit
of the probability $\Pb{\frac{N}{2\pi}\theta^N_1\notin A, \ldots,
\frac{N}{2\pi}\theta^N_N \notin A}$, where $A \subset \R_+$ is a compact set
(see Theorem 3.1.1 in \cite{AGZ}).

In the case of $\CPE$ matrices the situation is even simpler. The
point process
related to the rescaled (by the factor $\frac{N}{2\pi}$) eigenphases of a
$\CPE_N$ matrix behaves for large $N$ as a Poisson point process with the
parameter $\lambda = 1$.

For point processes,
related to the correlation functions is
the notion of \textbf{level spacing distribution},
denoted by $P(s)$, which is defined 
for a point process
$\{\alpha\vartheta_1, \ldots, \alpha\vartheta_{N}\}$ of the properly rescaled
eigenphases $(\vartheta_j)_{j=1}^N$ of a random $N$-dimensional
unitary matrix by
\begin{equation}\label{eq.defp(s)}
   P(s) := \lim_{\e\to 0} \frac{1}{2\e} \frac{1}{N}\sum_{j=1}^N \Pb{s_j \in
   (s - \e, s + \e)},
\end{equation}
where
\begin{equation}\label{eq.defS_k}
   s_1 = \alpha\left( \vartheta_1' + 2\pi - \vartheta_{N}' \right), \qquad s_j =
   \alpha(\vartheta_j' - \vartheta_{j-1}'), \ 1 < j \leq N,
\end{equation}
and $(\vartheta_j')_{j=1}^{N}$ is the non-decreasing rearrangement of the
sequence $(\vartheta_j)_{j=1}^{N}$. 
The scaling factor $\alpha$ is chosen so that
the mean distance $\E s_j$ between two consecutive rescaled eigenphases is
$1$. In the cases of a $\CUE_N$ or $\CPE_N$ matrix, one
has $\alpha =
\frac{N}{2\pi}$. We should bear in mind that the level spacing distribution
of the Poisson point process with the parameter $\lambda=1$ is exponential with the
density
\begin{equation}\label{eq.expdistribution}
	P(s) = e^{-s}.
\end{equation}
Moreover, it is easy to check that
\[
	P_{\CPE_N}(s) \xrightarrow[N\to\infty]{} e^{-s}.
\]
Of course, the limit for the $\CUE_N$ is different.

\subsection{Statement of results}
We now present our main results for the two cases
under consideration.
\subsubsection{$M=2$, $N$ large}
We begin by considering 
two independent $\CUE$ matrices $A$ and $B$ of size $N$.
We are
interested in the asymptotic behavior of the 
eigenphases of the tensor product $A \otimes B$.
Our first main
result is the following.
\begin{thm}\label{thm.cuecue}
Let $(\theta_j)_{j=1}^N$ and $(\phi_j)_{j=1}^N$ be the
eigenphases of two independent $\CUE_N$ matrices $A$ and $B$. 
Define the
point process $\sigma_N$ of rescaled eigenphases of the
matrix $A \otimes B$ as
\begin{equation}\label{eq.defsigma_N}
   \sigma_N(D) := \sum_{k,l=1}^N \1_{\left\{\frac{N^2}{2\pi}\left( \theta_k +
   \phi_l \mod 2\pi \right) \in D\right\}}, \qquad \textrm{for any compact
   set $D \subset \R_+$}.
\end{equation}
Let $\rho_k^N$, $k=1,2,\ldots$ be the intensities 
of the process $\sigma_N$. Then
\begin{equation}\label{eq.cuecueconv}
	\rho^N_k \xrightarrow[N\to\infty]{} 1,
\end{equation}
uniformly on any compact subset of $(\R_+)^k$.
%
\end{thm}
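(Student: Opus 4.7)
The plan is to compute $\rho_k^N(x_1,\dots,x_k)$ directly from definition \eqref{eq.defint}, writing it as a sum over $k$-tuples of distinct index pairs $((i_s,j_s))_{s=1}^k$, grouping the summands according to the coincidence patterns of $(i_1,\dots,i_k)$ and $(j_1,\dots,j_k)$, and identifying the ``free'' pattern (all $i_s$ distinct and all $j_s$ distinct) as the one producing the limit $1$, while every other pattern contributes $o(1)$. Writing $\xi_{ij}=\tfrac{N^2}{2\pi}(\theta_i+\phi_j \bmod 2\pi)$, one has
\[
\rho_k^N(x_1,\dots,x_k) = \sum_{\substack{(i_s,j_s)_{s=1}^k \\ \text{pairwise distinct}}} f_{\xi_{i_1 j_1},\dots,\xi_{i_k j_k}}(x_1,\dots,x_k).
\]

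For the free pattern, independence of $A$ and $B$, together with the identity $\sum_{\text{distinct}} f_{\theta_{i_1},\dots,\theta_{i_k}}=\rho_k^{\CUE_N}$ coming from \eqref{eq.intensities}, yields
\[
\text{(free contribution)} = \Big(\tfrac{2\pi}{N^2}\Big)^k \int_{[0,2\pi)^k} \det\big[S_N(t_i - t_j)\big] \det\big[S_N((y_i-t_i)-(y_j-t_j))\big]\,dt_1\cdots dt_k,
\]
with $y_s=2\pi x_s/N^2$. I would expand both determinants by the Leibniz rule and substitute the Fourier series $S_N(x)=\frac{1}{2\pi}\sum_{n\in I_N}e^{inx}$ (with $|I_N|=N$ consecutive integers), converting the integral, for each pair $(\sigma,\tau)\in S_k^2$, into a constrained sum over Fourier indices $(\mathbf{n},\mathbf{m})\in I_N^{2k}$ subject to $n_j - n_{\sigma^{-1}(j)}=m_j - m_{\tau^{-1}(j)}$ for every $j$. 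The identity pair $\sigma=\tau=\mathrm{id}$ places no constraint and, after the $(2\pi/N^2)^k$ prefactor, contributes exactly $1$. Any other $(\sigma,\tau)$ admits at most $N^{k+\min(c(\sigma),c(\tau))}$ solutions, where $c(\cdot)$ denotes the cycle count, so (since $\min(c(\sigma),c(\tau))<k$) its contribution is $O(N^{-1})$, uniformly in $\mathbf{y}$; the oscillating factor $e^{i\sum_s m_s(y_s-y_{\tau(s)})}$ only improves this bound.

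For the remaining patterns, let $a$ and $b$ denote the numbers of distinct $i$- and $j$-indices, with $(a,b)\neq(k,k)$ and hence $a+b<2k$. The number of such tuples is $O(N^{a+b})$, while the crude bound $\rho_m^{\CUE_N}\le S_N(0)^m=(N/2\pi)^m$ on the CUE marginals, combined with the rescaling factor $(2\pi/N^2)^k$, shows each associated density at a generic point of $(\mathbb{R}_+)^k$ is $O(N^{-2k})$. Each such pattern therefore contributes $O(N^{a+b-2k})=o(1)$, and summing over the finitely many patterns bounds the total by $O(N^{-1})$.

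The main obstacle I anticipate lies in the Fourier bookkeeping for the free term: organizing the $(k!)^2$ pieces, establishing the lattice-point count for each $(\sigma,\tau)$, and verifying that all estimates are uniform in $\mathbf{y}$ over the rescaled image of any compact $K\subset(\mathbb{R}_+)^k$. A secondary subtlety arises for $k\ge 4$: certain non-free patterns (the smallest being $(a,b)=(2,2)$ when $k=4$) lead to joint densities supported on proper affine subvarieties of $\mathbb{R}^k$, such as $\{x_{\pi(1)}+x_{\pi(4)}=x_{\pi(2)}+x_{\pi(3)}\}$. Since any such $K$ lies in $[0,M]^k$ with $M\ll N^2$ for $N$ large, the modular wraparound is inactive, and at generic $\mathbf{x}\in K$ off these subvarieties the singular contributions vanish identically, recovering the claimed uniform convergence on the complement.
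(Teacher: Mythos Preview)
Your approach is correct and broadly parallels the paper's, but the execution differs in two notable ways.

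\emph{Main (free) term.} The paper arrives at the same convolution integral
\[
\Big(\tfrac{2\pi}{N^2}\Big)^k\int_{[0,2\pi)^k}\det[S_N(t_i-t_j)]\,\det[S_N((y_i-t_i)-(y_j-t_j))]\,dt
\]
(after summing over the $2^k$ wraparound sectors), but analyzes it by expanding both determinants permutation-by-permutation and applying dominated convergence: the diagonal term $(2\pi S_N(0)/N)^{2k}=1$ survives, and each off-diagonal product contains at least one factor $\tfrac{2\pi}{N}S_N(u)$ with $u\neq 0$ a.e., which tends to $0$ pointwise while being uniformly bounded by $1$ (Lemma~\ref{lem-oldclaim}). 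This is soft and yields only $o(1)$. Your Fourier/lattice-point argument is sharper: the constraint system $n_j-n_{\sigma^{-1}(j)}=m_j-m_{\tau^{-1}(j)}$ indeed has at most $N^{k+\min(c(\sigma),c(\tau))}$ solutions, giving an explicit $O(N^{-1})$ rate per non-identity pair. Both are valid; yours is quantitatively stronger.

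\emph{Non-free terms.} The paper reaches these through a discretization of the $\theta$-axis followed by the partition formula \eqref{eq.keylemmaofrepeatedindices}; after the limits only $\theta$-coincidences survive, each carrying an explicit factor $N^{-(k-p)}$. Your direct density bound (marginal of $a$ eigenphases $\times$ marginal of $b$ eigenphases, rescaled by $(2\pi/N^2)^k$, integrated over an $(a{+}b{-}k)$-dimensional fiber) gives the same $O(N^{a+b-2k})$. One expository wrinkle: what you call ``the crude bound $\rho_m^{\CUE_N}\le (N/2\pi)^m$'' is a bound on the correlation function, not on a single-tuple marginal; either phrase the per-tuple density bound via the $O(1)$ marginals, or sum over tuples first and then invoke $\rho_a^{\CUE_N}\rho_b^{\CUE_N}$. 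The arithmetic is right either way.

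Your remark about the affine subvarieties for $k\ge 4$ is on target and is a point the paper does not make explicit: certain coincidence patterns produce singular contributions supported on hypersurfaces such as $\{x_1+x_4=x_2+x_3\}$, so the pointwise statement should be read on the complement of these measure-zero sets. The paper's route sidesteps this by discarding the corresponding $p_2<k$ terms in the discretization limit, which is legitimate for generic $x$; its closed formula \eqref{eq.intensityprefinal} then also delivers the uniform bound $|\rho_k^N|\le k^{k/2}$ used in Corollary~\ref{cor.cuecue}, which your argument does not immediately supply.
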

Thus, Theorem \ref{thm.cuecue} relates
the statistical properties of a properly rescaled phase-spectrum of a large
$\CUE_N \otimes \CUE_N$ matrix to those of a
Poisson point process.
A (not immediate) corollary of the convergence of 
intensities is the following.
\begin{cor}\label{cor.cuecue}
   For the point process $\sigma_N$ defined in \eqref{eq.defsigma_N}, 
   \begin{equation}\label{eq.noeigencue}
   \begin{split}
      \Pb{\emph{\textrm{$\sigma_N$ has
      no rescaled eigenphase in the interval $[0, s]$}}} \\
      = \Pb{\sigma_N([0, s]) = 0} \xrightarrow[N\to\infty]{} e^{-s}, \qquad s > 0.
   \end{split}
   \end{equation}
   In particular
   \begin{equation}\label{eq.p(s)cuecue}
      P_{\CUE_N \otimes \CUE_N}(s)  \xrightarrow[N\to\infty]{} e^{-s},
   \end{equation}
   where the level spacing distribution $P_{\CUE_N \otimes \CUE_N}(s)$ is
   defined by \eqref{eq.defp(s)}
\end{cor}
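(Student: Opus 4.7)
The strategy is to deduce \eqref{eq.noeigencue} from Theorem \ref{thm.cuecue} via a factorial-moment expansion, and then obtain \eqref{eq.p(s)cuecue} by a standard stationarity argument. For the first step, starting from the finite identity
$$
\1_{\{\sigma_N([0,s])=0\}} = \prod_{k,l=1}^{N}\left(1 - \1_{D_{k,l}}\right), \qquad D_{k,l} := \left\{\tfrac{N^2}{2\pi}(\theta_k + \phi_l \modulo 2\pi) \in [0,s]\right\},
$$
expanding the product, taking expectations and invoking \eqref{eq.defint} yields
$$
\Pb{\sigma_N([0,s])=0} = \sum_{m=0}^{N^2} \frac{(-1)^m}{m!} \int_{[0,s]^m} \rho_m^N(x_1, \ldots, x_m) \, \dd x_1 \cdots \dd x_m.
$$
By Theorem \ref{thm.cuecue}, for each fixed $m$ the $m$-th integral converges to $s^m$ as $N \to \infty$, so the formal term-by-term limit is $\sum_{m\geq 0} (-s)^m/m! = e^{-s}$.

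The main obstacle is justifying the exchange of $\lim_{N\to\infty}$ with the sum, which requires a uniform-in-$N$ bound of the form $\rho_m^N \leq C^m$ on $[0,s]^m$ with $C$ independent of $m$; dominated convergence on the counting measure over $m$ would then apply. To obtain such a bound, I would use the independence of $A$ and $B$: the unrescaled intensity decomposes as a sum, indexed by pairs of set-partitions of $\{1,\ldots,m\}$ (encoding which $\theta$-indices, respectively $\phi$-indices, among the $m$ chosen pairs coincide), of convolutions of $\CUE_N$ intensities evaluated at prescribed phase sums. Each such $\CUE_N$ intensity is controlled pointwise by Hadamard's inequality for the positive semi-definite kernel $S_N$:
$$
\rho_k^{\CUE_N}(\theta_1, \ldots, \theta_k) = \det\bigl[S_N(\theta_i - \theta_j)\bigr] \leq S_N(0)^k = (N/2\pi)^k.
$$
Tracking the rescaling factor $(2\pi/N^2)^m$ and summing over partition types, the dominant diagonal contribution (all $\theta$- and $\phi$-indices distinct) should approach $1$, while the collision terms vanish in $N$ yet remain uniformly bounded, together producing the required bound.

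For \eqref{eq.p(s)cuecue}, the rotational invariance of the $\CUE$ implies that $\sigma_N$ is translation-invariant on the rescaled circle. For stationary unit-intensity point processes, the level-spacing density $P(s)$ and the hole probability $E(s) := \Pb{\sigma_N([0,s])=0}$ are related by $P(s) = E''(s)$. The same factorial-moment expansion, combined with the uniform bound on $\rho_m^N$, controls $E_N$ and its $s$-derivatives uniformly on compact subsets of $\R_+$, so the pointwise convergence $E_N(s) \to e^{-s}$ upgrades to $C^2$-convergence and produces \eqref{eq.p(s)cuecue}.
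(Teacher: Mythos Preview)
Your approach is essentially the paper's: the same inclusion--exclusion expansion (the paper phrases it via J\'anossy densities in Lemma~\ref{lm.norescaled}, but the resulting identity is identical to yours), then a uniform-in-$N$ bound on $\rho_m^N$ to justify dominated convergence, and finally the relation $P(s)=E''(s)$.

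One correction is needed. You assert that dominated convergence \emph{requires} a bound $\rho_m^N\le C^m$ with $C$ independent of $m$, and that your partition/Hadamard argument will deliver it. Neither claim is right. What is actually required is only a uniform majorant $a_m$ with $\sum_m s^m a_m/m!<\infty$. Your decomposition over pairs of set-partitions produces a number of collision types governed by Bell numbers, which grow super-exponentially, so a term-by-term bound will not give $C^m$. The paper, in Remark~\ref{rem-1}, uses Hadamard on one determinant factor and \emph{integrates} the other (via \eqref{eq.intensityintegral}), then collapses the sum over partitions using the Stirling-number identity $\sum_{p}\sharp\mathfrak{S}(m,p)\,x(x-1)\cdots(x-p+1)=x^m$ at $x=N$; this yields $\rho_m^N\le m^{m/2}$. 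That bound is still not of the form $C^m$, but it suffices: $\sum_m s^m m^{m/2}/m!<\infty$ by Stirling's formula. So your strategy is sound, but replace the target $C^m$ by $m^{m/2}$ and use the integration/Stirling-number trick rather than a pointwise bound on both factors.
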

\begin{figure}[htb!]
  \begin{center}
    \scalebox{1.0}{\includegraphics[width=1.0\textwidth]{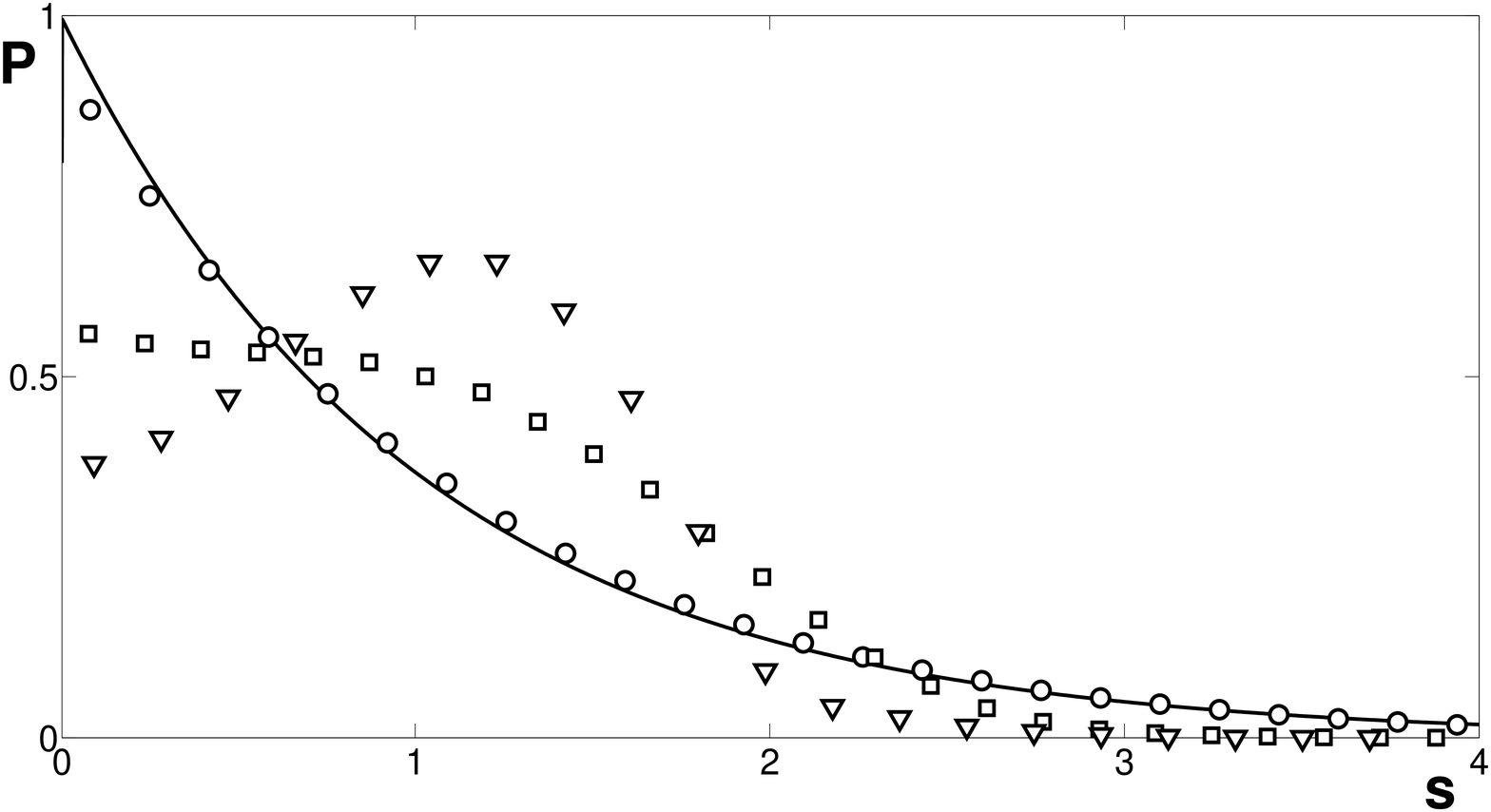}}
      \caption{The 
      level spacing distributions $P(s)$ for the tensor products of random unitary matrices  $CUE_N \otimes \CUE_N$ for $N = 2$ ($\triangledown$), $N = 3$ ($\square$), $N = 20$ ($\circ$). The symbols denote the numerical results respectively obtained for $2^{17}, 2^{16}, 2^{13}$ independent matrices, while the solid line represents the exponential distribution \eqref{eq.expdistribution}.}\label{fig.p(s)cuecue}
    \end{center}
\end{figure}
Our numerical results support \eqref{eq.cuecueconv}, i.e. the level spacing
distribution of the tensor product of two random unitary matrices of size N
is described asymptotically by the Poisson ensemble. 
The numerical data presented
in Figure \ref{fig.p(s)cuecue} reveals that just for $N = 20$ the difference
between $P_{\CUE_N \otimes \CUE_N}(s)$ and $P_{\CPE_N}(s)$ is negligible.

\subsubsection{$N=2$, $M$ large}
We next consider
$M$ independent $\CUE_2$
matrices $A_1, \ldots, A_M$ and study the asymptotic properties of
the phase-spectrum of a matrix $A_1 \otimes \ldots \otimes A_M$. 
Our main result is 
as
follows.
\begin{thm}\label{thm.cue^}
	Let $\theta_j^{1}, \theta_j^{2}$, $j = 1, \ldots, M$ be the
	eigenphases of independent $\CUE_2$ matrices $A_1, \ldots, A_M$. 
	Define the 
	point process $\tau_M$ of the
	rescaled eigenphases of a matrix $A_1 \otimes \ldots \otimes A_M$
	as
	\begin{equation}\label{eq.deftau_M}
		\tau_M(D) := \sum_{\epsilon = (\epsilon_1, \ldots, \epsilon_M) \in \{1, 2\}^M}
   \1_{\left\{\frac{2^M}{2\pi}\left( \theta^{\epsilon_1}_1 + \ldots +
   \theta^{\epsilon_M}_M \modulo 2\pi \right) \in D\right\}}, \qquad \textrm{for
   any compact set $D \subset \R_+$}.
	\end{equation}
	Then, for each $k$ there exists a continuous 
	function $\delta_k:\R_+\to \R_+$
	with $\delta_k(0)=0$
	so that
	for any mutually disjoint intervals $I_1, \ldots, I_k \subset \R_+$
	\begin{eqnarray*}
&&	\limsup_{M\to\infty} 
	\frac{\p{ \tau_M(I_1) > 0, \ldots, \tau_M(I_k) > 0 }}
	{|I_1|\cdot\ldots\cdot |I_k|} \leq ( 1+\delta_k(\max_j |I_j|))\,,\\
	&&\liminf_{M\to\infty} 
	\frac{\p{ \tau_M(I_1) > 0, \ldots, \tau_M(I_k) > 0 }}
	{|I_1|\cdot\ldots\cdot |I_k|} \geq ( 1-\delta_k(\max_j |I_j|))\,
	.
	\end{eqnarray*}
\end{thm}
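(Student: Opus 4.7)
My plan is to prove Theorem \ref{thm.cue^} by showing convergence of the joint factorial moments of $\tau_M$ to those of a Poisson point process of unit intensity, and converting this into the stated probability bounds via Bonferroni inequalities. The core input is a Fourier computation on the torus: for distinct patterns $\epsilon^{(1)},\ldots,\epsilon^{(k)} \in \{1,2\}^M$, the joint density of the $k$-vector of sums $S_{\epsilon^{(r)}} = \sum_{j=1}^M \theta^{\epsilon^{(r)}_j}_j$ modulo $2\pi$ is close to the uniform density $(2\pi)^{-k}$ on $[0,2\pi)^k$.

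The $\CUE_2$ density from \eqref{eq.cueNdensity} simplifies to $(2\pi)^{-2}(1-\cos(\theta^1-\theta^2))$, whose Fourier expansion on $[0,2\pi)^2$ has only three nonzero coefficients: $\hat p(0,0)=1$ and $\hat p(\pm 1, \mp 1) = -\tfrac{1}{2}$. By independence across the matrices $A_j$, the joint Fourier coefficient of $(S_{\epsilon^{(r)}}\modulo 2\pi)_{r=1}^k$ at a mode $n=(n_1,\ldots,n_k)\in\mathbb{Z}^k$ factorises as $\prod_{j=1}^M \hat p(a_j,b_j)$ with $a_j := \sum_{r:\epsilon^{(r)}_j=1} n_r$ and $b_j := \sum_{r:\epsilon^{(r)}_j=2} n_r$. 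Every allowed pair $(a_j,b_j)$ satisfies $a_j+b_j=0$, so only modes with $\sum_r n_r = 0$ survive; moreover each non-vanishing factor equals $\pm\tfrac12$, so the modulus of a non-trivial coefficient is $2^{-B(\epsilon,n)}$, where $B(\epsilon,n)$ counts the ``excited'' columns $j$ with $(a_j,b_j)\neq(0,0)$, a quantity typically linear in $M$ for distinct patterns.

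Setting $N_j := \tau_M(I_j)$ and $\tilde I_j := 2\pi I_j/2^M$, the disjointness of the $I_j$ forces every tuple contributing to
\[
  \E[N_1\cdots N_k]
  = \sum_{\substack{\epsilon^{(1)},\ldots,\epsilon^{(k)} \\ \mathrm{distinct}}}
  \Pb{S_{\epsilon^{(r)}}\in\tilde I_r,\,r=1,\ldots,k}
\]
to consist of pairwise distinct patterns. The trivial Fourier mode contributes $2^M(2^M-1)\cdots(2^M-k+1)\cdot(2\pi)^{-k}\prod_r|\tilde I_r| \to \prod_r|I_r|$, while the oscillatory tail vanishes as $M\to\infty$ --- in the $k=2$ case by $2^M\sum_{d=1}^M \binom{M}{d}(1/2)^d\cdot 2^{-2M}\cdot|I_1||I_2| = |I_1||I_2|(3/4)^M\to 0$. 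A parallel computation yields $\E[N_1^2\cdots N_k^2]\to\prod_r(|I_r|+|I_r|^2)$. Writing $\{N_j\geq 1\,\forall j\}=\bigcup_\sigma A_\sigma$ as a union over ordered distinct $k$-tuples $\sigma$ of events $A_\sigma := \{S_{\sigma_r}\in\tilde I_r\,\forall r\}$, disjointness of the $I_j$ gives $\sum_\sigma \1_{A_\sigma} = N_1\cdots N_k$, so Bonferroni's inequalities read
\[
  \E[N_1\cdots N_k] - \tfrac{1}{2}\bigl(\E[N_1^2\cdots N_k^2] - \E[N_1\cdots N_k]\bigr)
  \leq \Pb{N_j\geq 1\,\forall j}
  \leq \E[N_1\cdots N_k].
\]
Dividing by $\prod_r|I_r|$ and taking $\liminf_M,\limsup_M$ produces the theorem with $\delta_k(s) = \tfrac12((1+s)^k - 1)$, continuous and vanishing at $0$.

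The main obstacle is the bookkeeping needed to control the oscillatory Fourier sum for general $k$: one must show that the per-tuple decay $2^{-B(\epsilon,n)}$ summed over $\sim 2^{Mk}$ ordered tuples of distinct patterns still vanishes, uniformly in the finite but mode-dependent set of $n$'s with $\sum_r n_r=0$. The $k=2$ case admits the clean $(3/4)^M$ estimate because every relevant mode lies in $\{-1,0,1\}^2$ and $B(\epsilon,n)$ is the Hamming distance between the two patterns; for $k\geq 3$ the distinctness constraint interacts nontrivially with large modes (some $|n_r|\geq 2$), and one needs a classification of column types $(\epsilon^{(r)}_j)_r\in\{1,2\}^k$ to verify that $B(\epsilon,n)$ is of order $M$ for all tuples contributing non-trivially.
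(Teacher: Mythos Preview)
Your Fourier approach is a genuinely different route from the paper's, and it is attractive: the paper establishes the main contribution of ``good'' $\epsilon$'s via a local CLT (Bhattacharya--Rao), whereas your computation of $\hat p_\epsilon(n)$ from the three-term Fourier expansion of the $\CUE_2$ density is more direct and would, for $\epsilon$'s in which every column type $\alpha\in\{1,2\}^k$ occurs a positive fraction of the time, give the same conclusion with less machinery. The Bonferroni reduction in your last step is essentially the same as the paper's inclusion--exclusion.

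There is, however, a real gap in the control of the oscillatory part. Your final paragraph frames the difficulty for $k\ge 3$ as bookkeeping needed to show $B(\epsilon,n)$ is of order $M$; in fact this is \emph{false} in general. For $k\ge 4$ there exist $k$-tuples $\epsilon$ with pairwise distinct rows for which some nonzero mode $n$ with $\sum_r n_r=0$ has $B(\epsilon,n)=0$, so $\hat p_\epsilon(mn)=1$ for all $m\in\mathbb Z$ and the Fourier series is infinite. Concretely, take $k=4$ and $\epsilon$ whose columns lie in $\{(1,2,1,2),(1,2,2,1),(2,1,1,2),(2,1,2,1)\}$; the four rows are distinct, yet $S_{\epsilon^{(1)}}+S_{\epsilon^{(2)}}\equiv S_{\epsilon^{(3)}}+S_{\epsilon^{(4)}}$ deterministically, so the joint law is singular and $n^*=(1,1,-1,-1)$ is a kernel mode. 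Your bound ``$\le\prod_r|\tilde I_r|$'' for the integrated exponential cannot close the argument for these $\epsilon$'s.

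These degenerate tuples are exactly what the paper calls \emph{bad} $\epsilon$'s (fewer than $2^k$ distinct column types), and the paper disposes of them not by Fourier decay but by a rank argument: Lemma~\ref{lm.matrixrank} bounds from below the rank of the $\{0,1\}$ column-type matrix, and Lemma~\ref{lm.density} converts this into $\p{A_\epsilon}\le C\,2^{-M(1+\lfloor\log_2 j\rfloor)}$ when only $j<2^k$ column types occur, which beats the crude count $j^M$ of such $\epsilon$'s. Your sketch needs an analogous device. One could try to salvage the Fourier route by exploiting the decay $|\int_{\tilde I_r} e^{in_r x}\,dx|\le\min(|\tilde I_r|,2/|n_r|)$ to sum over the kernel lattice, but carrying this out uniformly over all possible kernel dimensions again reduces to a rank statement of the Lemma~\ref{lm.matrixrank} type. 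The same issue recurs, with $2k$ in place of $k$, in your claimed computation of $\E[N_1^2\cdots N_k^2]$.
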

Note that the statement of Theorem \ref{thm.cue^} is weaker than that of 
Theorem \ref{thm.cuecue}. This is due to the fact that stronger
correlations exist in the point process $\tau_M$, which prevent us from
discussing the convergence of its intensities to those of a Poisson
process. The mode of convergence is however strong enough to deduce 
interesting information, including the weak convergence of the processes.
We exhibit this by considering 
the behavior of the level 
spacings when $M$ tends to infinity.
\begin{cor}\label{cor.cue^N}
   For the point process $\tau_M$ defined in \eqref{eq.deftau_M}
   we have
   \begin{equation}\label{eq.noeigencue^}
   \begin{split}
      \Pb{\emph{\textrm{$\tau_M$ has
      no 
      eigenphase in the interval $[0, s]$}}} \\
      = \Pb{\tau_M([0, s]) = 0} \xrightarrow[M\to\infty]{} e^{-s}, \qquad s > 0.
   \end{split}
   \end{equation}
   In particular
   \begin{equation}\label{eq.p(s)cue^}
      P_{\CUE_2^{\otimes M}}(s)  \xrightarrow[M\to\infty]{} e^{-s},
   \end{equation}
   where the level spacing distribution $P_{\CUE_2^{\otimes M}}(s)$ is
   defined by \eqref{eq.defp(s)}.
\end{cor}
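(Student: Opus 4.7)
The plan is to derive the corollary from Theorem \ref{thm.cue^} in two stages: first, upgrade the joint-hitting bounds of Theorem \ref{thm.cue^} to pointwise convergence of the hole probability $E_M(s) := \Pb{\tau_M([0,s]) = 0} \to e^{-s}$ by a truncated Bonferroni argument; second, exploit the circular stationarity of $\tau_M$ to convert hole-probability convergence into convergence of the spacing distribution.

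For the first stage I would partition $[0,s]$ into $k$ equal subintervals $J_1, \dots, J_k$ of length $s/k$, set $E_j = \{\tau_M(J_j) > 0\}$, and use Bonferroni truncated at order $2m+1 \leq k$:
\[
\sum_{l=0}^{2m+1} (-1)^l B_l \;\leq\; \Pb{\tau_M([0,s])=0} \;\leq\; \sum_{l=0}^{2m} (-1)^l B_l, \qquad B_l := \sum_{|S|=l} \Pb{\bigcap_{j \in S} E_j}.
\]
Theorem \ref{thm.cue^} gives $\limsup_M B_l \leq \binom{k}{l}(s/k)^l(1+\delta_l(s/k))$ and $\liminf_M B_l \geq \binom{k}{l}(s/k)^l(1-\delta_l(s/k))$. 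Combining these via $\limsup B_l$ for even $l$ and $-\liminf B_l$ for odd $l$, the upper Bonferroni bound yields
\[
\limsup_M E_M(s) \;\leq\; \sum_{l=0}^{2m} (-1)^l \binom{k}{l}(s/k)^l \;+\; \sum_{l=0}^{2m} \binom{k}{l}(s/k)^l \delta_l(s/k).
\]
With $m$ fixed and $k \to \infty$ the first sum tends to $\sum_{l=0}^{2m} (-1)^l s^l/l!$, while the error sum—finite, with each term $\to (s^l/l!)\cdot 0 = 0$—vanishes. Letting $m \to \infty$ then gives $\limsup_M E_M(s) \leq e^{-s}$, and the matching lower Bonferroni bound produces $\liminf_M E_M(s) \geq e^{-s}$, establishing \eqref{eq.noeigencue^}.

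For the second stage I would use that Haar invariance under phase multiplication makes $\tau_M$ stationary on the circle of length $2^M$ with intensity $1$. For any realization, the Lebesgue measure of $\{x \in [0, 2^M) : \tau_M([x, x+s]) = 0\}$ equals $\sum_j (s_j^M - s)_+$ (one contribution per spacing exceeding $s$); dividing by $2^M$, taking expectation, applying Fubini, and invoking stationarity gives
\[
E_M(s) = \E\bigl[(s_1^M - s)_+\bigr] = \int_s^\infty V_M(u)\,\dd u, \qquad V_M(u) := \Pb{s_1^M > u}.
\]
Since each $V_M$ is nonincreasing in $[0,1]$, sandwiching $\frac{1}{\e}\int_s^{s+\e} V_M$ between $V_M(s+\e)$ and $V_M(s)$ and combining with $E_M(s) - E_M(s+\e) \to e^{-s} - e^{-(s+\e)}$ forces $V_M(s) \to e^{-s}$ pointwise, which is weak convergence of the spacing distribution to the unit exponential; this yields \eqref{eq.p(s)cue^}.

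The main obstacle is not the Bonferroni bookkeeping, which is routine once one truncates at a fixed level $2m$ in order to keep $\sum_{l \leq 2m} \delta_l(s/k)$ under control as $k \to \infty$, but rather making the density in \eqref{eq.p(s)cue^} fully precise: pointwise density convergence does not follow automatically from CDF convergence and would require an additional regularity input on the family $\{E_M\}$, or (more naturally) a reinterpretation of \eqref{eq.p(s)cue^} in the sense of weak convergence of the spacing measure.
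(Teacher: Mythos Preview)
Your argument is correct and close in spirit to the paper's, but the execution differs in a meaningful way. Both proofs partition $[0,s]$ into short intervals and feed the joint hitting estimates of Theorem~\ref{thm.cue^} into a limiting scheme. The paper, however, aims higher: it shows that $\tau_M([0,s])$ itself converges in law to a Poisson$(s)$ variable by matching the moments of $\sum_i \bar Z_i$ (where $\bar Z_i=\1_{\{\tau_M(I_i)>0\}}$) with those of a sum of i.i.d.\ Bernoulli$(\Delta)$ variables, and then uses the separate estimate $\Pb{Z_i\neq\bar Z_i}\leq C\Delta^2$ to transfer the result from $\sum_i\bar Z_i$ to $\sum_i Z_i=\tau_M([0,s])$; the hole probability $e^{-s}$ then drops out as the $k=0$ term of the Poisson limit. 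Your truncated Bonferroni route is more economical for the specific statement \eqref{eq.noeigencue^}: since $\{\tau_M([0,s])=0\}=\bigcap_j\{\bar Z_j=0\}$, you never need to control $\{Z_i\neq\bar Z_i\}$, and the two-parameter limit $k\to\infty$ then $m\to\infty$ reproduces the exponential series directly. For the passage to \eqref{eq.p(s)cue^} you are actually more explicit than the paper, which simply asserts that the spacing statement ``follows immediately''; your stationarity identity $E_M(s)=\int_s^\infty V_M(u)\,\dd u$ together with the monotonicity sandwich is the standard way to make this precise, and the caveat you raise about density versus distributional convergence applies equally to the paper's own treatment (compare the analogous step $P(s)=\dd^2 E(0;s)/\dd s^2$ in the proof of Corollary~\ref{cor.cuecue}).
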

The relevant numerical results which confirm \eqref{eq.p(s)cue^} are shown in
Figure \ref{fig.p(s)cue^}. Again we may observe that it is enough to take
relatively small $M$ in order to get a good approximation of the spectrum of
a matrix $\CUE_2^{\otimes M}$ by the Poisson ensemble.
\begin{figure}[htb!]
  \begin{center}
    \scalebox{1.0}{\includegraphics[width=1.0\textwidth]{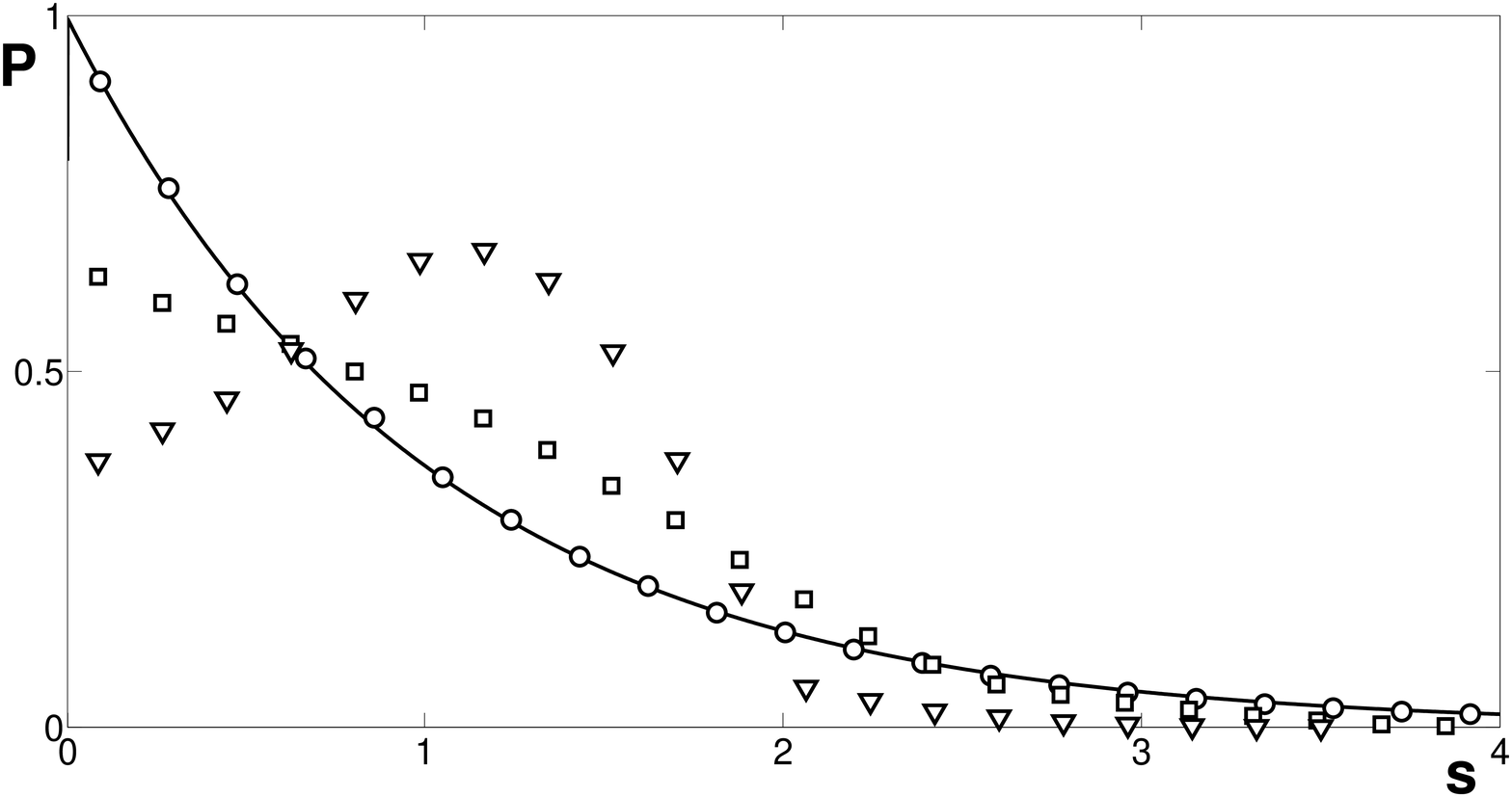}}
      \caption{Level spacing distributions $P(s)$ for the tensor products of
      random unitary matrices  $CUE_2^{\otimes M}$ for $M = 2$
      ($\triangledown$), $M = 3$ ($\square$), $M = 8$ ($\circ$). The symbols
      denote the numerical results respectively obtained for $2^{17}, 2^{16}, 2^{14}$ independent
      matrices, while the solid line represents the exponential distribution
      \eqref{eq.expdistribution}.}\label{fig.p(s)cue^}
    \end{center}
\end{figure}

\subsection{Discussion}
The convergence exhibited in Theorems \ref{thm.cuecue} and \ref{thm.cue^}, and
in their corollaries, 
is arguably not surprising: taking the tensor product introduces so many 
eigenphases ($N^2$ in the case of Theorem \ref{thm.cuecue},
$2^M$ in the case of Theorem \ref{thm.cue^}) that, after appropriate
scaling, the local correlations between adjacent eigenphases are not 
influenced by the long range correlation that is present due to the 
tensorization. One should however be careful in carying this heuristic 
too far: well known superposition and interpolation relations,
see \cite{FOR} and the discussion in \cite[Section 2.5.5]{AGZ},
show that the point process obtained by
the union of eigenvalues of, say, a $\GOE_N$ and $\GOE_{N+1}$ 
independent matrices, is closely related to that obtained from 
of a $\GUE_N$ matrix, and thus definitely not Poissonian.
This phenomenon had been also discussed in the physics literature
\cite{Jain}. Compared to that, the tensorization operation
appears to strongly decorrelate eigenphases on the local
level.

It is natural to try to
generalize Theorems \ref{thm.cuecue} and
\ref{thm.cue^} to other situations, where either $N$ or $M$ 
are finite but not necessarily equal to $2$, or both $N$ and $M$ 
go to infinity. While we expect similar methods to apply and yield
similar decorrelation results, there are several technical issues
to control, and we do not discuss such extensions here.


\section{Tensor product of two $N \times N$ unitary matrices}
\label{sec:spectral_statistics:two_nxn}

We prove in this section Theorem \ref{thm.cuecue} and
Corollary \ref{cor.cuecue}, that correspond
to the case $M=2$ and $N$ large.
We start with an elementary observation.
Recall the kernel $S_N$, see \eqref{eq.kernel}.
\begin{lm} 
  \label{lem-oldclaim}
  For any $N \geq 1$
\begin{equation}\label{eq.SNbound}
	\sup_{x \in \R} |S_N(x)| = \frac{N}{2\pi}.
\end{equation}
\end{lm}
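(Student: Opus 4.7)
The plan is to use the standard Dirichlet-kernel identity that represents $\sin(Nx/2)/\sin(x/2)$ as a sum of $N$ unit-modulus exponentials, and then apply the triangle inequality.

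First, I would recall the geometric-series identity
\[
  \sum_{k=0}^{N-1} e^{ikx} = \frac{e^{iNx}-1}{e^{ix}-1} = e^{i(N-1)x/2}\,\frac{\sin(Nx/2)}{\sin(x/2)},
\]
valid for $x \notin 2\pi\mathbb{Z}$. Multiplying through by $e^{-i(N-1)x/2}$ yields
\[
  \frac{\sin(Nx/2)}{\sin(x/2)} = \sum_{k=0}^{N-1} e^{i(k-(N-1)/2)x},
\]
which is a sum of $N$ complex numbers of modulus $1$.

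Next, I would apply the triangle inequality to this representation to conclude that for all $x \in \R \setminus 2\pi\Z$,
\[
  |S_N(x)| = \frac{1}{2\pi}\left|\frac{\sin(Nx/2)}{\sin(x/2)}\right| \leq \frac{N}{2\pi}.
\]
At the excluded points $x \in 2\pi\Z$, one uses the limit (or the expansion $\sin(Nx/2)/\sin(x/2)\to \pm N$ via l'Hospital), which is the computation already noted in the paper giving $S_N(0) = N/(2\pi)$; in particular this shows the bound is sharp.

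There is essentially no obstacle here — the argument is a one-line consequence of the closed form of the Dirichlet kernel, and the supremum is attained at $x=0$, matching the value $S_N(0)=N/(2\pi)$ stated earlier. The only mild care needed is to handle the removable singularities at the integer multiples of $2\pi$ separately before invoking the triangle inequality, or alternatively to view $\sin(Nx/2)/\sin(x/2)$ as a (continuous) trigonometric polynomial from the outset, for which the exponential-sum representation holds for every $x \in \R$ and the triangle inequality applies directly.
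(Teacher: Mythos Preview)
Your proof is correct but follows a different route from the paper. The paper proves the elementary inequality $|\sin(nu)| \leq n|\sin u|$ by induction on $n$ (using the addition formula $\sin((n+1)u)=\sin(nu)\cos u + \cos(nu)\sin u$), and then applies it with $u=x/2$ to bound $|S_N(x)|$. You instead rewrite $\sin(Nx/2)/\sin(x/2)$ via the geometric-series identity as a sum of $N$ unimodular exponentials and apply the triangle inequality. Both arguments are equally short and yield the same sharp bound attained at $x=0$; your approach has the small advantage that the exponential-sum representation makes $S_N$ manifestly a trigonometric polynomial, so the removable singularities at $2\pi\mathbb{Z}$ are handled automatically, while the paper's inductive inequality is perhaps slightly more self-contained in that it requires no complex exponentials.
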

\begin{proof}
We show inductively that
\[
	|\sin(nu)| \leq n|\sin u|, \qquad \text{for} \ u \in \R, n \geq 1.
\]
Hence
\[
	|S_N(x)| = \frac{1}{2\pi}\left| \frac{\sin\left( N\frac{x}{2}
\right)}{\sin\frac{x}{2}}\right| \leq \frac{N}{2\pi}.
\]
For $x = 0$ we have equality, which completes the proof.
\end{proof}

\begin{proof}[Proof of Theorem \ref{thm.cuecue}]
We begin with setting $\tilde{x}_1, \ldots, \tilde{x}_k \geq 0$ and recalling
that by definition
\begin{align*}
	\rho^N_k(\tilde{x}_1, \ldots, \tilde{x}_k) = \lim_{\tilde{\e}\to 0}
\frac{1}{(2\tilde{\e})^k} \mb{P}\Bigg(& \exists \ \begin{matrix} (i) = (i_1,
\ldots, i_k) \in \{1, \ldots, N\}^k \\ (j) = (j_1, \ldots, j_k) \in \{1, \ldots, N\}^k \end{matrix}\ \ \forall s = 1, \ldots, k \\
& \frac{N^2}{2\pi}\left( \theta_{i_s} + \phi_{j_s} \mod 2\pi \right) \in
(\tilde{x}_s - \tilde{\e}, \tilde{x}_s + \tilde{\e}) \Bigg).
\end{align*}
Let us first of all get rid of the addition modulo $2\pi$ noticing that the
event, probability of which we want to compute, is the sum of $2^k$ mutually
exclusive events occurring when $\theta_{i_s} + \phi_{j_s}$ is in the
interval $[0, 2\pi)$ or $[2\pi, 4\pi)$. Thus we can write the sought after
probability as
\begin{equation}\label{eq:probtot}
	\sum_{(\eta) = (\eta_1,\ldots, \eta_k) \in \{0, 1\}^k} \Pb{\exists \begin{matrix}
(i) \\ (j) \end{matrix} \ \forall s \ \theta_{i_s} + \phi_{j_s} \in (\eta_s\cdot 2\pi +
x_s - \e, \eta_s\cdot 2\pi + x_s + \e)},
\end{equation}
where we denote $x_s = \frac{2\pi}{N^2}\tilde{x}_s$ and $\e =
\frac{2\pi}{N^2}\tilde{\e}$. Let us now concentrate solely on the first term
corresponding to the index $(\eta) = (\eta_1,\ldots, \eta_k) = (0,\ldots, 0)$ (the
other terms can be dealt with in the same manner). In order to take an
advantage of the independence we observe that the considered quantity
equals
\[
	\lim_{K\to\infty} \sum_{\substack{\ell_1,\ldots,\ell_k = 1, \\
2\pi\ell_s/K < x_s}}^K \Pb{ \exists \begin{matrix} (i) \\ (j) \end{matrix} \ \forall s
\ \begin{matrix} \theta_{i_s} \in (2\pi\ell_s/K - \pi/K, 2\pi\ell_s/K +
\pi/K) \\ \phi_{j_s} \in (x_s - 2\pi\ell_s/K - \e, x_s - 2\pi\ell_s/K + \e)
\end{matrix}},
\]
where the constrains $2\pi\ell_s/K < x_s$ are the result of the fact that $\theta_{i_s} + \phi_{j_s} \in (0 \cdot 2\pi + x_s - \e, 0 \cdot 2\pi + x_s + \e)$, for $(\eta) = 0$, so, in particular, that $\theta_{i_s} < x_s + \e$.
Exploiting the independence we obtain that the last expression equals
\begin{equation}\label{eq.justafterxploitingindependence}
   \begin{split}
   	\sum_{\substack{\ell_1,\ldots, \ell_k = 1 \\ 2\pi\ell_s/K < x_s}}^K & \Pb{ \exists (i) \forall s \ \ \theta_{i_s} \in (2\pi\ell_s/K - \pi/K, 2\pi\ell_s/K + \pi/K)} \\
	& \ \cdot \Pb{ \exists (j) \forall s \ \ \phi_{j_s} \in (x_s - 2\pi\ell_s/K - \e, x_s - 2\pi\ell_s/K + \e)}.
   \end{split}
\end{equation}
Now observe that for a determinantal point process $\{\alpha_j\}_{j=1}^N$ with a kernel $K$ and fixed numbers $u_1, \ldots, u_k$ we have
\begin{equation}\label{eq.keylemmaofrepeatedindices}
\begin{split}
   & \Pb{\exists (i) \in \{1, \ldots, N\}^k \ \forall s = 1, \ldots, k \ \ \alpha_{i_s} \in (u_s - \delta, u_s + \delta)}  \\
   &= \sum_{p=1}^k \sum_{\pi \in \mathfrak{S}(k, p)} \lambda_\pi (u_1, \ldots, u_k) \left( (2\delta)^p \det\left[ K(u_{\pi(s,1)}, u_{\pi(t,1)}) \right]_{s,t=1}^p + o(\delta^p)\right),
\end{split}
\end{equation}
where $\mathfrak{S}(k, p)$ is the collection of all partitions into $p$ non-empty pairwise disjoint subsets of the set $\{1, \ldots, k\}$. By this we mean that if $\pi$ is such a partition then
\[
	\pi = \{\{\pi(1,1),\ldots, \pi(1,\sharp \pi(1))\}, \ldots, \{\pi(p,1),\ldots, \pi(p,\sharp \pi(p))\}\},
\]
where $\sharp \pi(q)$ is cardinality of the $q$-th block of the partition $\pi$. Moreover, to compactify
the notation, we attach to a partition $\pi$ the function $\fun{\lambda_\pi}{\R^k}{\{0,1\}}$, defined as
\[
	\lambda_\pi(u_1, \ldots, u_k) = \1_{\{u_{\pi(1,1)} = \ldots = u_{\pi(1,\sharp \pi(1))}, \ldots, u_{\pi(p,1)} = \ldots = u_{\pi(p,\sharp \pi(p))}\}}(u_1, \ldots, u_k).
\]
Applying this to formula \eqref{eq.justafterxploitingindependence} we obtain
\begin{align*}
	\sum_{\substack{\ell_1,\ldots, \ell_k = 1 \\ 2\pi\ell_s/K < x_s}}^K \sum_{p_1, p_2 = 1}^k \sum_{\substack{\pi_1 \in \mathfrak{S}(k, p_1) \\ \pi_2 \in \mathfrak{S}(k, p_2)}} 
	& \lambda_{\pi_1}\left( (2\pi \ell_s/K)_{s=1}^k \right)\lambda_{\pi_2}\left( (x_s - 2\pi\ell_s/K)_{s=1}^k \right) \\ 
	& 
	\!\!\!\!\!\!
	\!\!\!\!\!\!
	\!\!\!\!\!\!
	\!\!\!\!\!\!
	\!\!\!\!\!\!
	\!\!\!\!\!\!
	\!\!\!\!\!\!
	\!\!\!\!\!\!
	\!\!\!\!\!\!
	\cdot\left( \left( \frac{2\pi}{K} \right)^{p_1} \det\left[ S_N(2\pi\ell_{\pi_1(s,1)}/K - 2\pi\ell_{\pi_1(t,1)}/K) \right]_{s,t=1}^{p_1} + o(1/K^{p_1}) \right) \\
	& 
	\!\!\!\!\!\!
	\!\!\!\!\!\!
	\!\!\!\!\!\!
	\!\!\!\!\!\!
	\!\!\!\!\!\!
	\!\!\!\!\!\!
	\!\!\!\!\!\!
	\!\!\!\!\!\!
	\!\!\!\!\!\!
	\cdot\left( \left( 2\e \right)^{p_2} \det\left[ S_N\left(x_{\pi_2(s,1)} - 2\pi\ell_{\pi_2(s,1)}/K - x_{\pi_2(t,1)} + 2\pi\ell_{\pi_2(t,1)}/K\right) \right]_{s,t=1}^k + o(\e^{p_2}) \right)\,.
\end{align*}
Performing the limit $K \to \infty$ we notice that only the terms corresponding to $p_2 = k$ do not vanish, for,  otherwise, $\lambda_{\pi_2}$ would give nontrivial relations for $(\ell)$ which altogether with $\lambda_{\pi_1}$ make the sum over $(\ell)$ of at most $O(K^{p_1-1})$ terms. Recall that $\e/\tilde{\epsilon} = 2\pi/N^2$. 
Thus, taking the limit $\tilde{\e} \to 0$,
the extra factor $(2\pi/N^2)^k$ is produced, so we finally find that the considered term contributes
\begin{align*}
	\sum_{p = 1}^k \frac{1}{N^{k-p}}\sum_{\pi \in \mathfrak{S}(k, p)} \frac{1}{(2\pi)^p}\int_{\substack{[0,2\pi)^k \\ y_s <
x_s}} & \lambda_{\pi}(y_1, \ldots, y_k) \det\left[ \frac{2\pi}{N}S_N(y_{\pi(s,1)} - y_{\pi(t,1)}) \right]_{s,t=1}^p \\
&\cdot \det\left[ \frac{2\pi}{N}S_N(x_s - y_s -
x_t + y_t) \right]_{s,t=1}^k \dd \mathcal{H}_p(y_1,\ldots, y_k)
\end{align*}
to $\rho^N_k(x_1, \ldots, x_k)$, where $\mathcal{H}_p$ denotes the $p$-dimensional Hausdorff measure in $\R^k$. As already mentioned the other terms in
(\ref{eq:probtot}) can be calculated in a similar way, only the limits of the
integration have to be changed. Summing up, we get
\begin{equation}\label{eq.intensityprefinal}
\begin{split}
	\rho^N_k(x_1, \ldots, x_k) = & \sum_{(\eta) \in \{0, 1\}^k}\sum_{p = 1}^k \frac{1}{N^{k-p}}\sum_{\pi \in \mathfrak{S}(k, p)}
\frac{1}{(2\pi)^p}\int_{\substack{A_{(\eta)}}} \Bigg( \lambda_{\pi}(y_1, \ldots, y_k) \\
&\cdot \det\left[ \frac{2\pi}{N}S_N(y_{\pi(s,1)} - y_{\pi(t,1)}) \right]_{s,t=1}^p \\
&\cdot \det\left[ \frac{2\pi}{N}S_N(2\pi\eta_s + x_s - y_s - 2\pi\eta_t - x_t + y_t) \right]_{s,t=1}^k \Bigg) \dd \mathcal{H}_p(y_1,\ldots, y_k),
\end{split}
\end{equation}
where the subset $A_{(\eta)}$ of $[0, 2\pi)^k$ is the set of all $(y_1,
\ldots, y_k)$ such that either $y_s < x_s$ if $\eta_s = 0$, or $y_s \geq x_s$
if $\eta_s = 1$ for $s = 1, \ldots, k$.

To proceed we have to investigate the asymptotic behavior of the integrand
in (\ref{eq.intensityprefinal}). We will do it again only for $(\eta) = (0,
\ldots, 0)$, observing that an adaptation to other terms is straightforward. We start with the term $p = k$.
Then the integrand is a product of two determinants of matrices of size $k$, so applying to each of them the permutation definition and extracting the term referring to the trivial permutations, we find it equals
\begin{equation}
  \label{eq-oof}
	\left(\frac{2\pi}{N}S_N(0)\right)^{2k} + \sum_{\substack{\sigma \neq \id \, \textrm{or} \, \tau \neq
\id}} \sgn \sigma \sgn \tau\prod_{i=1}^k \frac{2\pi}{N}S_N(y_i - y_{\sigma(i)})
\prod_{j=1}^k \frac{2\pi}{N}S_N(x_j - y_j - x_{\tau(j)} + y_{\tau(j)}),
\end{equation}
where the summation involves all permutations $\sigma$ and $\tau$ of $k$
indices. The first term\\
 $(2\pi S_N(0)/N)^{2k}=1$, after substituting in \eqref{eq.intensityprefinal},
gives simply
\[
	\frac{1}{(2\pi)^k}\sum_{(\eta) \in \{0, 1\}^k} \int_{\substack{A_{(\eta)}}} \left(\frac{2\pi}{N}S_N(0)\right)^{2k} = 1.
\]
We will show that the second term in \eqref{eq-oof} after being put into \eqref{eq.intensityprefinal} vanishes in the limit. We consider here only the case
$k=2$ to explain the main idea. The terms involving more factors can be treated
along the same lines. The sum over $\sigma$ and $\tau$ reduces to
\begin{equation}\label{eq.lastterm}
\begin{split}
	-\left(\frac{2\pi}{N}S_N(0)\right)^2\left( \left(\frac{2\pi}{N}S_N(y_1 - y_2)\right)^2 + \left(\frac{2\pi}{N}S_N(x_1 - y_1 - x_2 + y_2)\right)^2 \right) \\
	+
\left(\frac{2\pi}{N}S_N(y_1 - y_2)\right)^2\left(\frac{2\pi}{N}S_N(x_1 - y_1 - x_2 + y_2)\right)^2.
\end{split}
\end{equation}
Let us for instance deal with the last term in equation \eqref{eq.lastterm}. Putting it into
\eqref{eq.intensityprefinal} we arrive at
\[
	\frac{1}{(2\pi)^2}\sum_{(\eta)}\int_{A_{(\eta)}} \left( \frac{2\pi}{N}S_N(y_1 - y_2)
\right)^2\left( \frac{2\pi}{N}S_N(x_1 - y_1 - x_2 + y_2) \right)^2.
\]
Taking a quick look at the integrand we see that the above expression goes to
$0$ when $N \to \infty$ by Lebesgue's dominated convergence theorem, for
$\frac{1}{N}S_N(u) \xrightarrow{N\to\infty} 0$, when $u \neq 0$, and the
appropriate bound \eqref{eq.SNbound} follows from
Lemma \ref{lem-oldclaim}.

For the terms corresponding to $k < p$, we easily notice that thanks to the factor $\frac{1}{N^{k-p}}$ they converge to $0$. The proof is now complete.
\end{proof}

\begin{remark}
  \label{rem-1}
By virtue of formula \eqref{eq.intensityprefinal} the joint intensities $\rho^N_k$ can be estimated
as
\begin{align*}
	\sup_{\R^k} |\rho_k^N| \leq & \frac{1}{N^k} \sup_{u_1, \ldots, u_k \in \R} \det\left[ \frac{2\pi}{N}S_N(u_s - u_t) \right]_{s,t=1}^k  \\
	&\cdot \sum_{p=1}^k \sharp \mathfrak{S}(k, p) \int_{[0,2\pi)^p} \det\left[ S_N(y_s - y_t) \right]_{s,t=1}^p \dd y_1 \ldots \dd y_p,
\end{align*}
where $\sharp X$ denotes cardinality of a set $X$.
Using Hadamard's inequality (see, e.g. (3.4.6) in \cite{AGZ}) for the first
term, the observation \eqref{eq.intensityintegral} for the second one, and
finally \eqref{eq.SNbound} we obtain
\[
\begin{split}
   \sup_{\R^k} |\rho_k^N| &\leq \frac{1}{N^k} \left( \sup \left|\frac{2\pi}{N}S_N\right| \right)^k k^{k/2} \sum_{p=1}^k \sharp \mathfrak{S}(k, p) \frac{N!}{(N - k)!} = k^{k/2}\frac{1}{N^k} \sum_{p=1}^k \sharp \mathfrak{S}(k, p) \frac{N!}{(N - k)!}.
\end{split}
\]
Due to the  well-known combinatorial fact that
\[
	\sum_{p=1}^k \sharp \mathfrak{S}(k, p) x(x-1)\cdot\ldots(x-p+1) = x^k,
\]
($\sharp \mathfrak{S}(k, p)$ is the Stirling number of the second kind, consult e.g. \cite{GKP}) we may conclude with an useful bound
\begin{equation}\label{eq.intensitiesbound}
   \sup_{\R^k} |\rho_k^N| \leq k^{k/2}.\qed
\end{equation}
\end{remark}

\begin{proof}[Proof of Corollary \ref{cor.cuecue}]

For the proof of \eqref{eq.noeigencue} we have to calculate the probability
of the event that there is no rescaled eigenphase in a given interval. This
is done in the following lemma.
\begin{lm}\label{lm.norescaled}
   Let $\chi$ be a point process related to the eigenphases, possibly
   rescaled, of a $CUE_N$ matrix $A_N$ with the joint intensities $\rho_k$,
   $k=1,2,\ldots$ (so $\rho_\ell \equiv 0$, for $\ell > N$). Then for any
   compact set $D$
   \begin{equation}\label{eq.noeigenphaseininterval}
      \Pb{\chi(D) = 0} = 1 + \sum_{\ell = 1}^\infty \frac{(-1)^\ell}{\ell !}
      \int_{D^\ell} \rho_\ell.
   \end{equation}
\end{lm}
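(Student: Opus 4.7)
The plan is to reduce the identity to a standard inclusion–exclusion expansion. Since the eigenphases $\theta_1^N,\ldots,\theta_N^N$ are (after rescaling) the $N$ atoms of the process $\chi$, we have
\[
\Pb{\chi(D)=0}=\Pb{\theta_1^N\notin D,\ldots,\theta_N^N\notin D}=\E\prod_{j=1}^N\bigl(1-\1_D(\theta_j^N)\bigr).
\]
I would expand the product on the right and take expectations inside the sum, obtaining
\[
\Pb{\chi(D)=0}=\sum_{\ell=0}^N(-1)^\ell\sum_{\substack{S\subset\{1,\ldots,N\}\\|S|=\ell}}\E\prod_{j\in S}\1_D(\theta_j^N).
\]

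Next I would use the exchangeability of $(\theta_1^N,\ldots,\theta_N^N)$ under the symmetric joint density $P_{\CUE_N}$: each of the $\binom{N}{\ell}$ inner expectations equals the common value $\Pb{\theta_1^N\in D,\ldots,\theta_\ell^N\in D}$. To connect this to $\rho_\ell$, I invoke the relation stated right before \eqref{eq.intensityintegral}, namely that $\rho_\ell$ equals $N!/(N-\ell)!$ times the $\ell$-dimensional marginal density of $(\theta_1^N,\ldots,\theta_N^N)$. Integrating this identity over $D^\ell$ yields
\[
\int_{D^\ell}\rho_\ell=\frac{N!}{(N-\ell)!}\,\Pb{\theta_1^N\in D,\ldots,\theta_\ell^N\in D},
\]
so that $\binom{N}{\ell}\Pb{\theta_1^N,\ldots,\theta_\ell^N\in D}=\frac{1}{\ell!}\int_{D^\ell}\rho_\ell$.

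Substituting back gives
\[
\Pb{\chi(D)=0}=\sum_{\ell=0}^N\frac{(-1)^\ell}{\ell!}\int_{D^\ell}\rho_\ell,
\]
and since $\rho_\ell\equiv 0$ for $\ell>N$ the upper limit may be replaced by $\infty$, producing the claimed identity after separating the $\ell=0$ term. There is essentially no analytic obstacle here: the finiteness of $N$ makes every sum finite, so no convergence or interchange issues arise, and the whole proof is a transparent combinatorial identity. The only point that requires a little care is making sure the statement applies to the rescaled process as well as the unrescaled one, but this is immediate since rescaling is a deterministic bijection that transports both the atoms and the joint intensities consistently.
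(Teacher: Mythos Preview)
Your proof is correct but takes a genuinely different route from the paper. The paper argues via J\'anossy densities: it writes $\Pb{\chi(D)=0}=1-\sum_{k\geq 1}\Pb{\chi(D)=k}$, expresses each $\Pb{\chi(D)=k}$ as $\frac{1}{k!}\int_{D^k} j_{D,k}$ with $j_{D,k}=\sum_{r\geq 0}\frac{(-1)^r}{r!}\rho_{k+r}(\,\cdot\,,D,\ldots,D)$, and then rearranges the resulting double sum using the binomial identity $\sum_{k\geq 1}\binom{\ell}{k}(-1)^k=-1$. Your approach instead expands $\prod_j(1-\1_D(\theta_j^N))$ directly and identifies each symmetric sum with $\frac{1}{\ell!}\int_{D^\ell}\rho_\ell$ via the marginal--intensity relation. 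Your argument is shorter and more transparent in this finite-$N$ setting where the atoms are explicitly labelled and exchangeable; the paper's J\'anossy-density route is the natural one from the general point-process viewpoint and ties in with the formalism of \cite{AGZ} already in use. One minor remark: the lemma is actually invoked for the process $\sigma_N$, which has $N^2$ atoms rather than $N$; your argument carries over verbatim with $N$ replaced by $N^2$, but since you appeal specifically to the $\CUE_N$ marginal relation stated before \eqref{eq.intensityintegral}, it is worth noting that the same factorial-marginal identity holds for any simple point process with a fixed finite number of exchangeable atoms.
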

\begin{proof}
Clearly, we have
\[
  	\Pb{\chi(D) = 0} = 1 - \sum_{k = 1}^N \Pb{\chi(D) = k}.
\]
One way to compute the probability $\Pb{\chi(D) = k}$ is to use the notion of
J\'anossy densities $j_{D,k}(x_1,\ldots,x_k)$ (see Definition 4.2.7 in
\cite{AGZ}). They can be expressed in terms of the joint intensities as
\begin{equation}\label{janossy1}
j_{D,k}(x_1,\ldots,x_k)=\sum_{r=0}^\infty\frac{1}{r!}
(-1)^r\rho_{k+r}(x_1,\ldots,x_k,\underbrace{D, \ldots, D}_r),
\end{equation}
where
\begin{equation}\label{janossy2}
\rho_{k+r}(x_1,\ldots,x_k,\underbrace{D, \ldots, D}_r)=
\int_{D^r}\rho_{k+r}(x_1,\ldots,x_k,y_1,\ldots, y_r)dy_1\cdots dy_r.
\end{equation}
They exist whenever
\begin{equation}\label{janossy3}
\sum_k\int_{D^k}\frac{k^r\rho_k(x_1,\ldots,x_k)}{k!}dx_1\cdots dx_k<\infty,
\end{equation}
which is clearly fulfilled in our case, as $\rho_\ell \equiv 0$ for $\ell
> N$. Moreover, the vanishing of $\rho_\ell$ for large enough $\ell$
makes every sum in the following finite so we will not have troubles with
interchanging the order of summations.

In terms of the J\'anossy intensities, the probability $\Pb{\chi(D) = k}$
reads as (see Equation (4.2.7) of \cite{AGZ})
\begin{equation}\label{janossy4}
\Pb{\chi(D) = k}=\frac{1}{k!}\int_{D^k}j_{D,k}(x_1,\ldots,x_k)dx_1\cdots dx_k,
\end{equation}
and, consequently,
\begin{align*}
   \Pb{\chi(D) = 0} &= 1 - \sum_{k = 1}^N \frac{1}{k!} \int_{D^k} j_{D,k} \\
   &= 1- \sum_{k=1}^n \frac{1}{k!} \int_{D^k} \sum_{r \geq 0} \frac{(-1)^r}{r!} \rho_{k+r}(x_1, \ldots, x_k, \underbrace{D, \ldots, D}_r) \dd x_1 \ldots \dd x_k \\
   &= 1 - \sum_{k \geq 1}\sum_{r \geq 0} \frac{1}{k!}\frac{(-1)^r}{r!} \int_{D^{k+r}}\rho_{k+r} = 1 - \sum_{k \geq 1}\sum_{\ell \geq k} \frac{1}{k!}\frac{(-1)^{\ell - k}}{(\ell - k)!} \int_{D^\ell}\rho_\ell \\
   &= 1 - \sum_{\ell \geq 1} \Bigg[ \sum_{k \geq 1} {\ell \choose k} (-1)^k \Bigg] \frac{(-1)^\ell}{\ell !} \int_{D^\ell}\rho_\ell = 1 + \sum_{\ell \geq 1} \frac{(-1)^\ell}{\ell !} \int_{D^\ell} \rho_\ell.
\end{align*}
\end{proof}

Lemma \ref{lm.norescaled} applied to the process $\sigma_N$ yields
\[
	\Pb{\sigma_N([0,s])=0} = 1 + \sum_{\ell \geq 1} \frac{(-1)^\ell}{\ell !} \int_{[0,s]^\ell} \rho_\ell^N.
\]
To pass to the limit $N \to \infty$ we need an appropriate bound on the
intensities $\rho_\ell^N$. In Remark \ref{rem-1}
we showed 
that $|\rho_\ell^N| \leq \ell^{\ell/2}$ (see \eqref{eq.intensitiesbound}). 
Therefore, by Lebesgue's dominated convergence theorem, we get
\[
	\lim_{N\to\infty} \Pb{\sigma_N([0,s])=0} = 1 + \sum_{\ell \geq 1}
\frac{(-1)^\ell}{\ell !} \int_{[0,s]^\ell} \lim_{N\to\infty} \rho_\ell^N = 1
+ \sum_{\ell \geq 1} \frac{(-1)^\ell}{\ell !} s^\ell = e^{-s}.
\]
This completes the proof of \eqref{eq.noeigencue}.

The formula \eqref{eq.p(s)cuecue} follows now from a relation connecting the
probability $E(0,s)$ that a randomly chosen interval of length $s$ is
free from eigenphases with the level spacing distribution $P(s)$,
\eqref{eq.defp(s)} (see equation (6.1.16a) in \cite{Me04}),
\begin{equation}\label{}
P(s)=\frac{\dd^2}{\dd s^2} E(0;s).
\end{equation}
We have just showed that $ \lim_{N\to\infty}\Pb{\sigma_N([0,s])=0} = E(0;s) =
e^{-s}$. Thus, indeed
\[
	\lim_{N\to\infty}P_{\CUE_N \otimes \CUE_N}(s) = \frac{\dd^2}{\dd s^2} e^{-s} = e^{-s}.
\]
\end{proof}

\section{Tensor product of $M$ unitary matrices of size $2 \times 2$}
\label{sec:spectral_statistics:M_2x2}

Now we will prove Theorem \ref{thm.cue^}. In the course of the proof we will need three lemmas. Let us start with them.
\begin{lm}\label{lm.q.g.sum}
	Fix a positive integer $s$ and a number $\gamma \in (0,1/s)$. For each positive integer $n$ let us define the set
$
	\mc{L}_n = \{\ell = (\ell_1, \ldots, \ell_s) \ | \ \mb{Z} \ni \ell_j \geq 0, \sum_{j=1}^s \ell_j = n\}.
$ 
	Then
	\begin{equation}\label{eq.lm.q.g.sum}
		\sum_{\ell \in \mc{L}_n, \exists j \; \ell_j/n \leq \gamma} \frac{1}{s^n}\frac{n!}{\ell!} = 1 - \sum_{\ell \in \mc{L}_n, \forall j \; \ell_j/n > \gamma} \frac{1}{s^n}\frac{n!}{\ell!} \xrightarrow[n\to\infty]{} 0.
	\end{equation}
\end{lm}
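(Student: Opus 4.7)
The plan is to interpret the multinomial coefficients probabilistically and then invoke the law of large numbers on each coordinate. Specifically, let $X_1, \ldots, X_n$ be i.i.d.\ random variables, each uniform on $\{1, \ldots, s\}$, and set $L_j = \#\{i : X_i = j\}$ for $j = 1, \ldots, s$. Then $(L_1, \ldots, L_s)$ is multinomial with parameters $(n; 1/s, \ldots, 1/s)$, so
\[
\Pb{(L_1, \ldots, L_s) = (\ell_1, \ldots, \ell_s)} = \frac{1}{s^n}\frac{n!}{\ell_1!\cdots \ell_s!}
\]
for $\ell \in \mc{L}_n$. The middle equality in \eqref{eq.lm.q.g.sum} is then nothing more than the fact that the multinomial distribution sums to $1$ over $\mc{L}_n$, i.e.\ the identity $\sum_{\ell \in \mc{L}_n} \frac{n!}{\ell!} = s^n$.

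For the limit, I would rewrite the left-hand side as
\[
\sum_{\ell \in \mc{L}_n,\ \exists j:\ \ell_j/n \leq \gamma} \frac{1}{s^n}\frac{n!}{\ell!} = \Pb{\exists j \in \{1,\ldots,s\}:\ L_j/n \leq \gamma} \leq \sum_{j=1}^s \Pb{L_j/n \leq \gamma},
\]
by the union bound. For each fixed $j$, the marginal $L_j$ is binomial with parameters $(n, 1/s)$, so $\E[L_j/n] = 1/s$ and $\mathrm{Var}(L_j/n) = \frac{(1-1/s)}{ns} \to 0$. Since the hypothesis $\gamma < 1/s$ forces $1/s - \gamma > 0$, Chebyshev's inequality gives
\[
\Pb{L_j/n \leq \gamma} \leq \Pb{|L_j/n - 1/s| \geq 1/s - \gamma} \leq \frac{1-1/s}{n s (1/s-\gamma)^2} \xrightarrow[n\to\infty]{} 0.
\]
Summing the $s$ terms (with $s$ fixed) yields the desired convergence to $0$.

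There is really no substantive obstacle here: the statement is a direct consequence of the weak law of large numbers for each of the $s$ coordinates of a multinomial vector, combined with a union bound. The only thing one must be careful about is the strict inequality $\gamma < 1/s$, which is exactly what is needed so that the event $\{L_j/n \leq \gamma\}$ is a large-deviation event rather than a typical one; without it the claim would fail. A Chernoff/Hoeffding bound would give exponential decay and thus a quantitative rate, but this is not needed for the lemma as stated.
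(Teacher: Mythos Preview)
Your proof is correct and follows essentially the same route as the paper: probabilistic interpretation of the multinomial coefficients, union bound over the $s$ coordinates, and a concentration inequality for the binomial marginal. The only difference is that the paper uses Hoeffding's inequality to obtain exponential decay $s\exp\big(-2n(1/s-\gamma)^2\big)$, whereas you use Chebyshev; as you already note, this refinement is irrelevant for the lemma as stated.
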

\noindent Here, we 
adopt the convention that $\ell! = \ell_1!\cdot\ldots\cdot\ell_s!$.
\begin{proof}
First observe  that
\begin{align*}
	\sum_{\ell , \exists j \; \ell_j/n \leq \gamma} \frac{1}{s^n}\frac{n!}{\ell!} &\leq s\sum_{\ell , \ell_1/n \leq \gamma} \frac{1}{s^n}\frac{n!}{\ell!} = s \sum_{\ell_1 = 0}^{\lfloor \gamma n \rfloor} \frac{1}{s^n}\frac{n!}{\ell_1!(n-\ell_1)!}\sum_{\ell_2 + \ldots + \ell_s \leq n - \ell_1} \frac{(n-\ell_1)!}{\ell_2!\cdot\ldots\cdot\ell_s!} \\
	&= s \sum_{\ell_1 = 0}^{\lfloor \gamma n \rfloor} \frac{1}{s^n}{n \choose n-\ell_1} (s-1)^{n-\ell_1} = s \sum_{k = n - \lfloor \gamma n \rfloor}^n {n \choose k}\left( 1 - \frac{1}{s} \right)^k\left( \frac{1}{s} \right)^{n-k}.
\end{align*}
Let $X_1, X_2, \ldots$ be i.i.d. Bernoulli random variables such that $\p{X_1 = 0} = 1/s = 1 - \p{X_1 = 1}$. Denote $S_n = X_1 + \ldots + X_n$. Then the last expression equals $s\p{S_n \geq n - \lfloor \gamma n \rfloor }$ and can be estimated from above as follows
\[
	s\p{S_n \geq n - \gamma n } = s\p{\frac{S_n - \E S_n}{n} \geq \frac{1}{s} - \gamma} \leq s\exp \big( -2n(1/s - \gamma)^2 \big) \xrightarrow[n\to\infty]{} 0,
\]
where the last inequality follows for instance from Hoeffding's inequality (see \cite{ZZ}).
\end{proof}

\begin{lm}\label{lm.density}
	Let $X$ be a random vector in $\R^n$ with a bounded density. Let $\fun{A}{\R^n}{\R^k}$ be a linear mapping of rank $r$. Then there exists a constant $C$ such that for any intervals $I_1, \ldots, I_k \subset \R$ of finite length we have
	\[
		\p{AX \in I_1 \times \ldots I_k}	\leq C |I_{i_1}|\cdot\ldots\cdot |I_{i_r}|,
	\]
	where $1 \leq i_1 < \ldots < i_r \leq k$ are indices of those rows of the matrix $A$ which are linearly independent. 
\end{lm}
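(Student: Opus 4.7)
The plan is to reduce the $k$-dimensional bound to an $r$-dimensional one and then change variables. First I would introduce $B \colon \R^n \to \R^r$, the linear map built from the $r$ linearly independent rows of $A$, namely rows $i_1, \ldots, i_r$. Since all the remaining rows of $A$ are linear combinations of these, the event $\{AX \in I_1 \times \cdots \times I_k\}$ is contained in $\{BX \in I_{i_1} \times \cdots \times I_{i_r}\}$, so it suffices to bound the latter probability by $C \cdot |I_{i_1}| \cdots |I_{i_r}|$.

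Next I would exploit surjectivity of $B$. Pick $r$ columns of $B$ that form an invertible $r \times r$ submatrix $B_1$, and after permuting coordinates of $\R^n$ write $x = (x_1, x_2) \in \R^r \times \R^{n-r}$ and $Bx = B_1 x_1 + B_2 x_2$. For each fixed $x_2$, the slice $\{x_1 : B_1 x_1 + B_2 x_2 \in I_{i_1} \times \cdots \times I_{i_r}\}$ is an affine image of the box $I_{i_1} \times \cdots \times I_{i_r}$, so it has Lebesgue measure $|I_{i_1}| \cdots |I_{i_r}|/|\det B_1|$. Bounding the density $f$ by $\|f\|_\infty$ on the slice and integrating first in $x_1$ and then in $x_2$ produces
\[
\p{BX \in I_{i_1} \times \cdots \times I_{i_r}} \leq \frac{\|f\|_\infty}{|\det B_1|} \cdot \mathrm{vol}\bigl(\pi_2(\mathrm{supp}(f))\bigr) \cdot |I_{i_1}| \cdots |I_{i_r}|,
\]
where $\pi_2$ denotes projection onto the last $n-r$ coordinates; this identifies the constant $C$.

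The delicate step, which I would verify carefully, is the finiteness of $\mathrm{vol}(\pi_2(\mathrm{supp}(f)))$: boundedness of $f$ alone is not enough to guarantee boundedness of the marginal density of $BX$ on $\R^r$ (take $n=2$, $r=1$ and $f(x_1,x_2) = \tfrac12\, \1_{\{0\le x_1\le 1,\; 0\le x_2 \le 1/\sqrt{x_1}\}}$ for a bounded density whose first marginal is not in $L^\infty$). In the intended application $X$ is a vector of eigenphases, so $\mathrm{supp}(f) \subset [0, 2\pi)^n$ is compact and this factor is automatically finite, making the constant $C$ perfectly explicit. For a more general statement one would instead compute the density of $BX$ directly via the pushforward formula $h(y) = \frac{1}{|\det B_1|} \int_{\R^{n-r}} f\bigl(B_1^{-1}(y - B_2 x_2), x_2\bigr) \, \dd x_2$ and bound $\sup_y h$ using whatever additional integrability on $f$ is on hand.
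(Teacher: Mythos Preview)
Your argument follows the same route as the paper's: discard the redundant rows, then bound the probability by the density times the Lebesgue measure of the preimage of the box under the surjective $r\times n$ map. Your caution about the marginal-density step is well placed and in fact sharper than the paper, which simply asserts that the marginal $(X_1,\ldots,X_r)$ has a bounded density; as your counterexample shows, the lemma as stated (bounded density alone) is not quite true, and it is the compact support present in the application that rescues it.
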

\begin{proof}
Let $a_1, \ldots, a_k \in \R^n$ be rows of the matrix $A$. We know there are $r$ of them, say $a_1, \ldots, a_r$, which are linearly independent. Thus there exists an invertible $r \times r$ matrix $U$ such that
\[
	U\begin{bmatrix} a_1 \\ \vdots \\ a_r \end{bmatrix} = \begin{bmatrix} e_1 \\ \vdots \\ e_r \end{bmatrix} =: E,
\]
where $e_i \in \R^n$ is the $i$-th vector of the standard basis of $\R^n$. Notice that
\begin{align*}
	\p{AX \in I_1 \times \ldots \times I_k} &\leq \p{U^{-1}EX \in I_1 \times \ldots \times I_r} = \p{(X_1, \ldots, X_r) \in U(I_1 \times \ldots I_r)} \\
	&\leq C|U(I_1\times \ldots \times I_k)| = C|\det U|\cdot|I_1|\cdot\ldots\cdot |I_r|,
\end{align*}
for the vector $(X_1, \ldots, X_r)$ also has a bounded density on $\R^r$. This finishes the proof.
\end{proof}

\begin{lm}\label{lm.matrixrank}
	Let $A$ be a matrix of dimension $k\times j$, with entries in $\{0,1\}$, and satisfying the following 			conditions
	\begin{enumerate}[(i)]
		\item no two columns are equal.	 	
   	\item no two rows are equal.  
   	\item one column consists of all $1$s.  
	\end{enumerate}
	Then, the rank of $A$ is at least $\min(k, \lfloor \log_2 j\rfloor+1)$.
\end{lm}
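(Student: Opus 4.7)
The plan is to let $r$ denote the rank of $A$, show the standard inequality $j\leq 2^r$, and then upgrade it to $j\leq 2^r-1$ in the case $r<k$ by using conditions (ii) and (iii) together. Once this is in hand, the lemma follows from the arithmetic observation that $j\leq 2^r-1$ and $r\in\mathbb{Z}$ imply $r\geq \lfloor\log_2 j\rfloor+1$, which combined with the trivial $r\leq k$ yields the desired lower bound.

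For the first step, I pick an index set $I\subset\{1,\ldots,k\}$ with $|I|=r$ such that the rows $\{\mathrm{row}_i:i\in I\}$ form a basis of the row space. Every other row is a linear combination of these $r$ rows, so two columns that agree on the coordinates indexed by $I$ must (by applying the same linear combinations) agree everywhere. The restriction map to $I$-coordinates is therefore injective on the $j$ columns; since each restricted column lies in $\{0,1\}^r$, this gives $j\leq 2^r$. The second step is where condition (iii) enters. Assume $r<k$ and, toward a contradiction, that $j=2^r$, so that every $x\in\{0,1\}^r$ occurs as the $I$-restriction of some column. Fix any row index $i_\star\notin I$ and write $\mathrm{row}_{i_\star}=\sum_{s=1}^r \alpha_s\,\mathrm{row}_{i_s}$ with $\alpha_s\in\R$. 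Then for the column whose $I$-restriction is $x\in\{0,1\}^r$, the entry in row $i_\star$ equals $\sum_s \alpha_s x_s$, and this value must lie in $\{0,1\}$. Testing $x=e_s$ forces $\alpha_s\in\{0,1\}$; testing $x=e_s+e_t$ forces $\alpha_s+\alpha_t\in\{0,1\}$; hence at most one $\alpha_s$ equals $1$. If exactly one $\alpha_s$ equals $1$, then $\mathrm{row}_{i_\star}=\mathrm{row}_{i_s}$, contradicting (ii); if all $\alpha_s$ vanish, then $\mathrm{row}_{i_\star}\equiv 0$, but evaluating this row at the all-ones column of (iii) gives $1$, a contradiction. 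Thus $j\leq 2^r-1$ when $r<k$.

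The final step is the routine translation: $j\leq 2^r-1$ forces $r>\log_2 j$, and since $r$ is an integer this is the same as $r\geq \lfloor\log_2 j\rfloor+1$. Combining the two cases $r<k$ (handled above) and $r=k$ (trivial, since then $r=k\geq\min(k,\lfloor\log_2 j\rfloor+1)$) yields the conclusion. The main conceptual obstacle is seeing where condition (iii) should be used — it is not needed for the bound $j\leq 2^r$, and in fact without it the all-zero row would be compatible with $j=2^r$; its role is exactly to kill the ``zero row'' possibility in the contradiction step and thereby produce the extra bit that upgrades $\lfloor\log_2 j\rfloor$ to $\lfloor\log_2 j\rfloor+1$.
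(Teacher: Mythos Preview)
Your proof is correct and follows essentially the same approach as the paper's (which is attributed to Dima Gourevitch): reformulate the claim as ``$j\leq 2^r$, with equality forcing $r=k$'', establish $j\leq 2^r$ via injectivity of the restriction to $r$ independent rows, and in the equality case write an extra row as $\sum_s\alpha_s\,\mathrm{row}_{i_s}$, use the presence of the columns $e_s$ to force $\alpha_s\in\{0,1\}$, and then derive a contradiction with (ii). The only cosmetic difference is that the paper applies condition (iii) directly---evaluating the linear combination at the all-ones column gives $\sum_s\alpha_s=1$, hence exactly one $\alpha_s$ equals $1$---whereas you first use the columns $e_s+e_t$ to get ``at most one'' and then invoke (iii) to exclude the zero-row case; the two routes are interchangeable.
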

\begin{proof}{\it (Due to Dima Gourevitch)}
  Denote $r = \textrm{rank} A$. The assertion of the lemma is equivalent to the statement that $2^r \geq j$ and if $2^r = j$ then $r = k$.

We may assume without loss of generality that the first $r$ rows of $A$ are linearly independent and the others are their linear combinations. Under this assumption, if two columns are identical in the first $r$
coordinates then they are identical in all coordinates. By condition \emph{(i)}, such columns do not exist. Therefore the $r \times j$ submatrix $B$ which consists of the first $r$ rows has distinct columns. As a result $j \leq 2^r$.

Now suppose $j = 2^r$. If $k > r$, consider the $r+1$ row of $A$. It is a linear combination of the first $r$ rows. Since the columns of $B$ include the column $e_i = (0,..,0,1,0,..,0)$ for all $i=1,\ldots,r$, the coefficient of each row is either $0$ or $1$. Since $A$ includes a column of $1$s, the coefficient of exactly one row is $1$, and all other coefficients vanish. Thus, the $r+1$-th row is identical to one of the first $r$ rows - in contradiction to condition \emph{(ii)}. 
\end{proof}

\begin{proof}[Proof of Theorem \ref{thm.cue^}]

Fix an integer $k \geq 1$ and finite intervals $I_1, \ldots, I_k \subset \R_+$ which are mutually disjoint.  We need
to compute the probability of the event $\{\tau_M(I_j) > 0, j = 1, \ldots, k\}$ which means that in each interval $I_j$ there is a rescaled eigenphase. Such eigenphase is of the form $\frac{2^M}{2\pi}\left( \theta^{\epsilon_1}_1 + \ldots + \theta^{\epsilon_M}_M \modulo 2\pi \right)$ for some $\epsilon = (\epsilon_1, \ldots, \epsilon_M) \in \{1, 2\}^M$. Therefore
\[
	\left\{\tau_M(I_j) > 0, j = 1, \ldots, k\right\} = \bigcup_\epsilon A_\epsilon,
\]
where
\begin{equation}\label{eq.defA_epsilon}
	A_\epsilon = \bigg\{ \sum_{i=1}^M \theta_i^{\epsilon^j_i} \modulo 2\pi \in \underbrace{\frac{2\pi}{2^M}I_j}_{J_j}, j = 1, \ldots, k \bigg\},
\end{equation}
and $\epsilon$ runs over the set 
\begin{equation}\label{eq.defE}
	\mathcal{E} = \left\{[\epsilon_i^j]_{i = 1,\ldots, M}^{j = 1, \ldots, k} \ | \ \epsilon_i^j \in \{1,2\}, \ \epsilon^u \neq \epsilon^v, \textrm{for $u \neq v, u, v = 1, \ldots, k$}\right\}
\end{equation}
of all $k \times M$ matrices with entries $1, 2$ which have pairwise distinct rows $\epsilon^j = (\epsilon^j_1, \ldots, \epsilon^j_M) \in \{1,2\}^M$, $j = 1, \ldots, k$ ($j$-th row $\epsilon^j$ describes the $j$-th eigenphase and since intervals are disjoint we assume the rows are distinct). Column vectors are denoted by $\epsilon_i = [\epsilon_i^1, \ldots, \epsilon_i^k]^T$, $i = 1, \ldots, M$.

We say that $\epsilon$ is \emph{bad} if the collection of its vector columns $\left\{ \epsilon_i, i = 1, \ldots, M \right\}$ is less than $2^k$. Otherwise $\epsilon$ is called \emph{good}.

Obviously,
\[
	\p{\bigcup_{\textrm{good $\epsilon$'s}} A_\epsilon} \leq \p{\bigcup_{\epsilon} A_\epsilon} \leq \p{\bigcup_{\textrm{good $\epsilon$'s}} A_\epsilon} + \p{\bigcup_{\textrm{bad $\epsilon$'s}} A_\epsilon}.
\]
The strategy is to show that the contribution of bad $\epsilon$'s vanishes for large $M$ while good $\epsilon$'s essentially provide the desired result $\prod_j |I_j|$ when $M$ goes to infinity. So the proof will be divided into several parts.

\paragraph*{Good $\epsilon$'s.}

The goal here is to prove
\begin{equation}\label{eq.goodgoal}
	\lim_{\max_j |I_j| \to 0}\lim_{M\to\infty} \frac{1}{|I_1|\cdot
	\ldots\cdot |I_k|}\p{ \bigcup_{\textrm{good $\epsilon$'s}} 
	A_\epsilon } = 1,
\end{equation}
with the required uniformity in the choice of the disjoint intervals $I_j$.
By virtue of
\[
	\sum_{\textrm{good $\epsilon$'s}} \p{A_\epsilon} - \sum_{ \substack{\textrm{good $\epsilon, \widetilde{\epsilon}$} \\ \epsilon \neq \widetilde{\epsilon}} } \p{A_\epsilon \cap A_{\widetilde{\epsilon}}} \leq \p{ \bigcup_{\textrm{good $\epsilon$'s}} A_\epsilon } \leq \sum_{\textrm{good $\epsilon$'s}} \p{A_\epsilon}
\]
it suffices to prove that
\begin{equation}\label{eq.thingone}
	\lim_{M\to\infty} \sum_{\textrm{good $\epsilon$'s}} \p{A_\epsilon} = \prod |I_j|
\end{equation}
uniformly,
and that the correlations between two different good epsilons does not matter
\begin{equation}\label{eq.thingtwo}
	\limsup_{\max_j |I_j| \to 0}\limsup_{M\to\infty} \frac{1}{\prod |I_j|}\sum_{ \substack{\textrm{good $\epsilon, \widetilde{\epsilon}$} \\ \epsilon \neq \widetilde{\epsilon}} } \p{A_\epsilon \cap A_{\widetilde{\epsilon}}} = 0.
\end{equation}

Let us now prove \eqref{eq.thingone}. The proof of \eqref{eq.thingtwo} is deferred to the very end as we will need the ideas developed here as well as in the part devoted to bad $\epsilon$'s.

Given $\epsilon \in \mc{E}$ and a vector $\alpha = [\alpha_1 \ldots \alpha_k]^T \in \{1,2\}^k$ we count how many column vectors of $\epsilon$ equals $\alpha$ and call this number $\ell_\alpha$. Then $\sum_\alpha \ell_\alpha = M$. Note that $\epsilon$ is good iff all $\ell_\alpha$s are nonzero. The crucial observation is that the probability of the event $A_\epsilon$ does depend only on the vector $\ell = (\ell_\alpha)_{\alpha \in \{1,2\}^k}$ associated to $\epsilon$ as described before. Indeed, the sum $\sum_{i=1}^M [\theta_i^{\epsilon_i^1} \ldots \theta_i^{\epsilon_i^k}]^T \modulo 2\pi$ is identically distributed as the random vector $\sum_\alpha \psi(\alpha, \ell_\alpha) \modulo 2\pi$, where
\begin{equation}\label{eq.psi}
	\psi (\alpha, \ell_\alpha) = \begin{bmatrix} \psi_1 (\alpha, \ell_\alpha) \\ \vdots \\ \psi_k (\alpha, \ell_\alpha) \end{bmatrix} = \begin{bmatrix} \theta_{i_1}^{\alpha_1} \\ \vdots \\ \theta_{i_1}^{\alpha_k} \end{bmatrix} + \ldots + \begin{bmatrix} \theta_{i_{\ell_\alpha}}^{\alpha_1} \\ \vdots \\ \theta_{i_{\ell_\alpha}}^{\alpha_k} \end{bmatrix} \modulo 2\pi
\end{equation}
is a sum modulo $2\pi$ of i.i.d. vectors. Note that the distribution of $\psi(\alpha, \ell_\alpha)$ does not depend on the choice of indices $i_1, \ldots, i_{\ell_\alpha}$ but only on $\alpha$ and $\ell_\alpha$. Consequently, denoting by $\mc{E}_\ell$ the set of all $\epsilon$'s such that there are exactly $\ell_\alpha$ indices $1 \leq i_1 < \ldots < i_{\ell_\alpha} \leq M$ for which $\epsilon_{i_1} = \ldots = \epsilon_{i_{\ell_\alpha}} = \alpha$, we have that the value of $\p{A_\epsilon}$ is the same for all $\epsilon \in \mc{E}_\ell$. Clearly $\sharp \mc{E}_\ell = \frac{M!}{\ell !}$, whence
\begin{equation}\label{eq.C_l}
	\sum_{\textrm{good $\epsilon$'s}} \p{A_\epsilon} = \sum_{\textrm{good $\ell$'s}} \frac{M!}{\ell !}\p{ \sum_{\alpha \in \{1,2\}^k} \psi(\alpha, \ell_\alpha) \modulo 2\pi \in J_1 \times \ldots \times J_k }.
\end{equation}

The idea is to identify those terms which will sum up to $\prod |I_i|$ and the rest which will be neglected in the limit of large $M$. To do this, set a positive parameter $\gamma < 1/2^k$ and let us call a good $\ell$ \emph{very good} (\emph{v.g.} for short) if $\ell_\alpha > \gamma M$ for every $\alpha$ and \emph{quite good} (\emph{q.g.} for short) otherwise. We claim that
\begin{equation}\label{eq.claim1}\tag{C1}
	\p{ \sum \psi(\alpha, \ell_\alpha) \modulo 2\pi \in J_1 \times \ldots \times J_k } \leq C \prod |J_j|, \qquad \textrm{\emph{for a good} $\ell$},
\end{equation}
and
\begin{equation}\label{eq.claim2}\tag{C2}
\begin{split}
	\p{ \sum \psi(\alpha, \ell_\alpha) \modulo 2\pi \in J_1 \times \ldots \times J_k } =  \frac{\prod |J_j|}{(2\pi)^k}\left( 1 + \frac{r_\ell}{\sqrt{M}} \right), \qquad |r_\ell| \leq C, \\
	\textrm{\emph{for a very good} $\ell$},
\end{split}
\end{equation}
\emph{where $C$ is a constant} (from now on in this proof we adopt the convention that $C$ is a constant depending only on $k$ which may differ from line to line). 

Let us postpone the proofs and see how to conclude \eqref{eq.thingone}. Notice that $\frac{\prod |J_j|}{(2\pi)^k} = \frac{1}{2^{kM}}\prod |I_j|$. Thus applying \eqref{eq.claim1} we obtain
\[
	\sum_{\textrm{q.g. $\ell$'s}}\p{ \sum \psi(\alpha, \ell_\alpha) \modulo 2\pi \in J_1 \times \ldots \times J_k} \leq \prod |I_j|\cdot C \sum_{\textrm{q.g. $\ell$'s}} \frac{1}{2^{kM}}\frac{M!}{\ell!}.
\]
By Lemma \ref{lm.q.g.sum} it vanishes when $M\to\infty$. Now we deal with very good $\ell$'s writing with the aid of \eqref{eq.claim2} that
\begin{align*}
	\sum_{\textrm{v.g. $\ell$'s}}\p{ \sum \psi(\alpha, \ell_\alpha) \modulo 2\pi \in J_1 \times \ldots \times J_k} = \prod |I_j| \Bigg(& \sum_{\textrm{v.g. $\ell$'s}} \frac{1}{2^{kM}}\frac{M!}{\ell!} \\
	&+ \sum_{\textrm{v.g. $\ell$'s}} \frac{1}{2^{kM}}\frac{M!}{\ell!}\frac{r_\ell}{\sqrt{M}}\Bigg).
\end{align*}
The first term in the bracket approaches $1$ in the limit $M\to\infty$ due to Lemma \ref{lm.q.g.sum}, while the second one approaches $0$ as it is bounded
above by $C\frac{1}{\sqrt{M}}$.

\begin{proof}[Proof of \eqref{eq.claim1}]
Let us define the vectors
\[
	e_j = (\underbrace{2, \ldots, 2}_{j-1}, 1, \underbrace{2, \ldots, 2}_{k-j}) \in \{1,2\}^k, \qquad j = 1, \ldots, k.
\]
Since $\ell$ is good, in particular we have that $\ell_{e_j} > 0$, so denoting the random vector $\psi (e_j, \ell_{e_j})$ by $\Psi^j$ we have $\sum_\alpha \psi (\alpha, \ell_\alpha ) = (\Psi^1 + \ldots + \Psi^k) + \sum_{\alpha \notin \{e_1, \ldots, e_k\}} \psi (\alpha, \ell_\alpha)$. By independence it is enough to show that the random vector $\Psi = \Psi^1 + \ldots + \Psi^k \modulo 2\pi$ has a bounded density on $[0,2\pi)^k$. Equation \eqref{eq.psi} yields that
\[
	\Psi^j = (\underbrace{Y_j, \ldots, Y_j}_{j-1}, X_j, \underbrace{Y_j, \ldots, Y_j}_{k-j}),
\]
where $(X_j, Y_j)$ are independent random vectors on $[0, 2\pi)^2$ with the same distributions as the vectors $(\theta_1^1 + \ldots + \theta_{\ell_{e_j}}^1 \modulo 2\pi, \theta_1^2 + \ldots + \theta_{\ell_{e_j}}^2 \modulo 2\pi)$ respectively. Clearly, the vector $(X_j, Y_j)$ has a bounded density on $[0,2\pi)^2$ because the vector $(\theta_1^1, \theta_1^2)$ has a bounded density. Therefore the vector $(X_1, Y_1, \ldots, X_k, Y_k)$ has a bounded density on $[0, 2\pi)^{2k}$. A certain linear transformation with determinant $1$ maps this vector to $(\Psi^1 + \ldots + \Psi^k, Y_1, \ldots, Y_k)$ which consequently also has a bounded density. One projects it to the first $k$ coordinates and then takes care of addition modulo $2\pi$ obtaining that $\Psi$ has a bounded density, which finishes the proof.
\end{proof}

\begin{proof}[Proof of \eqref{eq.claim2}]
Given a vector $\alpha \in \{1,2\}^k$ let $\Theta^\alpha$ denote the random vector in $[0,2\pi)^k$ identically distributed as the vector $(\theta^{\alpha_1}_1, \ldots, \theta^{\alpha_k}_1)$. Take its independent copies $\Theta_1^\alpha, \Theta_2^\alpha, \ldots$ such that the family $\{\Theta_1^\alpha, \Theta_2^\alpha, \ldots\}_{\alpha \in \{1,2\}^k}$ also consist of independent random vectors. Then $\E \Theta^\alpha = [\pi, \ldots, \pi]^T$, and
\begin{align*}
	p_{\ell, M} &= \p{\sum_\alpha \psi(\alpha, \ell_\alpha) \modulo 2\pi \in J_1 \times \ldots \times J_k} = \p{\sum_\alpha \sum_{l=1}^{\ell_\alpha} \Theta^\alpha_l \modulo 2\pi \in J_1 \times \ldots \times J_k} \\
	&= \sum_{i_1, \ldots, i_k = 0}^{M-1} \p{\sum_\alpha \sum_{l=1}^{\ell_\alpha} \Theta^\alpha_l \in (J_1+2\pi i_1) \times \ldots \times (J_k+2\pi i_k)} \\
	&
	\!\!\!\!\!\!\!
	\!\!\!\!\!\!\!
	\!\!\!\!\!\!\!
	= \sum_i \p{\sum_\alpha \sum_{l=1}^{\ell_\alpha} \frac{\Theta^\alpha_l - \E\Theta^\alpha_l}{\sqrt{M}} \in \frac{1}{\sqrt{M}}(J_1+2\pi (i_1-M/2)) \times \ldots \times \frac{1}{\sqrt{M}}(J_k+2\pi (i_k-M/2))}.
\end{align*}
To ease the notation we introduce new indices 
\[j = \left( i_1 - \frac{M}{2}, \ldots, i_k - \frac{M}{2} \right) \in \left\{ -\frac{M}{2}, -\frac{M}{2} +1, \ldots, \frac{M}{2} - 1 \right\}^k,\]
sets 
\[ K_{j,M} = \frac{1}{\sqrt{M}}(J_1+2\pi j_1) \times \ldots \times \frac{1}{\sqrt{M}}(J_k+2\pi j_k), \]
and the vector 
\[ S_M = \sum_\alpha \sum_{l=1}^{\ell_\alpha} \frac{\Theta^\alpha_l - \E\Theta^\alpha_l}{\sqrt{M}}.\]

Now we intend to use the local Central Limit Theorem of \cite{BR}. Indeed, due to independence such a theorem should hopefully yield that $S_M$ has a normal distribution for large $M$. To be more precise, let us consider the matrix $\Cov S_M = \sum_\alpha \frac{\ell_\alpha}{M} \Cov \Theta^\alpha$ and its eigenvalues. Since for any $x \in \R^k$
\[
	x^T(\Cov S_M)x 
	=\sum_\alpha \frac{\ell_\alpha}{M} x^T(\Cov \Theta^\alpha)x \leq \underbrace{\max_\alpha \| \Cov \Theta^\alpha \|^{1/2}}_{C} |x|^2,
\]
it is clear that the largest eigenvalues are uniformly (i.e. with respect to $M$) bounded by $C$, which depends only on $k$. To provide an uniform bound for the smallest eigenvalues let us observe that (recall that $e_i$ is the vector $(2, \ldots, 2, 1, 2, \ldots, 2)$)
\[
	x^T(\Cov S_M)x \geq \sum_{j=1}^k \frac{\ell_{e_j}}{M} x^T(\Cov \Theta^{e_j})x > \gamma x^T\Big( \sum_{j=1}^k \Cov \Theta^{e_j} \Big)x \geq \gamma\cdot \frac{\pi^2}{3} |x|^2,
\]
where the second inequality is because $\ell$ is very good. 

It
is a matter of direct computation to see the last inequality as for $k \geq 2$ we have $\sum_{j=1}^k \Cov \Theta^{e_j} = \big((k-2)\pi^2/3 - 2\big)[1 \ldots 1]^T[1 \ldots 1] + \textrm{diag} (2 + 2\pi^2/3, \ldots, 2 + 2\pi^2/3)$ and for $k = 1$ the sum equals $\pi^2/3$. Therefore, with the matrix $B_M$ given by
\[
	B_M^2 = (\Cov S_M)^{-1}
\]
it holds that
\[
	\frac{1}{C}|x| \leq |B_Mx| \leq C|x|.
\]
Therefore the assumptions of \cite[Corollary 19.4]{BR} are satisfied (for the family of independent random vectors $\{\Theta_1^\alpha, \Theta_2^\alpha, \ldots\}_{\alpha \in \{1,2\}^k}$), so the vector $B_MS_M$ possesses a density $q_M$ and
\[
	\sup_{x \in \R^k} \big(1 + |x|^{k+2}\big)\left( q_M(x) - \phi(x) - \frac{1}{\sqrt{M}}P_M(x)\phi(x) \right) = O(M^{-k/2}),
\]
where $\phi(x) = \frac{1}{\sqrt{2\pi}^k}e^{-|x|^2/2}$ is the density of the standard normal distribution in $\R^k$ and $P_M$ is a polynomial of degree $k-1$ whose coefficients depends on the cumulants of the vectors $B_M\Theta^\alpha$. We may put it differently, i.e.
\[
	q_M(x) = \phi(x) + \frac{1}{\sqrt{M}}\bigg( \underbrace{P_M(x)\phi(x) + \frac{f_M(x)}{1 + |x|^{k+2}}}_{h_M(x)} \bigg),
\]
for some functions $f_M$ uniformly bounded $\sup_M \sup_{x \in \R^k} |f_M(x)| = C < \infty$. Therefore, denoting $L_{j,M} = B_MK_{j,M}$,
\begin{equation}\label{eq4}
\begin{split}
	p_{\ell, M} &= \sum_j \p{S_M \in K_{j,M}} = \sum_j \p{B_MS_M \in B_MK_{j,M}} = \sum_j \int_{L_{j,M}} q_M \\
	&= \sum_j \int_{L_{j,n}} \phi + \frac{1}{\sqrt{M}}\sum_j \int_{L_{j,n}} h_M = a_M + \frac{1}{\sqrt{M}}b_M.
\end{split}
\end{equation}

Let us firstly deal with the error term $b_M$. Denoting
\[
	\kappa = \frac{|J_1|\cdot \ldots \cdot|J_k|}{(2\pi)^k}
\]
we are to show that
\begin{equation}\label{eq5}
	|b_M| \leq C\kappa.
\end{equation}
To do this we estimate the integrated function
\[
	|h_M(x)| \leq |P_M(x)|\phi(x) + \frac{C}{1 + |x|^{k+2}} =: h(x).
\]
Then $|b_M| \leq \sum_j \int_{L_{j,M}} h.$ Introduce \emph{full} boxes
\[
	F_{j,M} = B_M\left(\frac{1}{\sqrt{M}}([0, 2\pi) + 2\pi j_1) \times \ldots \times \frac{1}{\sqrt{M}}([0, 2\pi) + 2\pi j_k)\right)
\]
and observe that
\[
	\int_{L_{j,M}} h = \frac{|L_{j,M}|}{|F_{j,M}|}|F_{j,M}|\frac{1}{|L_{j,n}|}\int_{L_{j,M}} h \leq \kappa |F_{j,M}|\sup_{L_{j,M}} h \leq \kappa |F_{j,M}|\sup_{F_{j,M}} h.
\]
Since $\textrm{diam} F_{j,M} \leq C\frac{2\pi\sqrt{k}}{\sqrt{M}} \xrightarrow[M \longrightarrow \infty]{} 0$, the sets $F_{j,M}$ are pairwise disjoint and sum up to $B_M[-\pi\sqrt{M}, \pi\sqrt{M})^k$, we can infer that the sum $\sum_j |F_{j,M}|\sup_{F_{j,M}} h$ converges to $\int_{\R^k} h = C < \infty$. Hence, this sum is bounded by $C$ and we get \eqref{eq5}.

Now we handle the main term $a_M$. We prove it equals $\kappa$ up to another error $\kappa\frac{C}{\sqrt{M}}$. Let $A_{j,M}: \R^k \longrightarrow \R^k$ be the linear isomorphism mapping $F_{j,M}$ onto $L_{j,M}$. It equals $B_M\tilde{A}_{j,M}B_M^{-1}$, where $\tilde{A}_{j,M}$ is the linear mapping transforming the box $B_M^{-1}F_{j,M}$ onto the box $B_M^{-1}L_{j,M}$, whence $|\det A_{j,M}| = \kappa$. Thus, changing the variable we obtain
\[
	\int_{L_{j,M}} \phi(x) \dd x = \kappa \int_{F_{j,M}} \phi(A_{j,M}x) \dd x.
\]
Notice that $A_{j,M}x$ is close to $x$, whenever $x \in F_{j,M}$, for
\[
	|A_{j,n}x - x| \leq \textrm{diam} F_{j,M}, \qquad x \in F_{j,M}.
\]
Consequently, on $F_{j,M}$, $\phi(A_{j,M}x)$ is close to $\phi(x)$. Strictly, we use the mean value theorem and get
\[
	\int_{L_{j,M}} \phi(x) \dd x = \kappa \int_{F_{j,M}} \phi(x) \dd x + \kappa \int_{F_{j,M}} \nabla \phi_V(\eta_x)\cdot (A_{j,M}x - x) \dd x,
\]
for some mean points $\eta_x \in [x, A_{j,M}x]$. This results in
\begin{align*}
	a_M &= \sum_j \int_{L_{j,M}} \phi(x) \dd x = \kappa \sum_j \int_{F_{j,M}} \phi + \kappa \int_{\bigcup_j F_{j,M}} \nabla \phi(\eta_x)\cdot (A_{j,M}x - x) \dd x \\
	&= \kappa \bigg( 1 \underbrace{- \int_{\R^k \setminus B_M[-\pi\sqrt{M}, \pi\sqrt{M})^k} \phi}_{c_M} + \underbrace{\sum_j \int_{F_{j,M}} \nabla \phi_V(\eta_x)\cdot (A_{j,M}x - x) \dd x}_{d_M} \bigg).
\end{align*}
We are almost done. Clearly $c_M$ converges to $0$ faster that $1/\sqrt{M}$, so $|c_M| \leq C/\sqrt{M}$. For $d_M$ we use the Schwarz inequality and integrability of $|\nabla \phi(\eta_x)|$
\[
	|d_M| \leq \sum_j \int_{F_{j,M}} |\nabla \phi(\eta_x)||A_{j,M}x - x| \dd x \leq \textrm{diam} F_{j,M} \int_{\bigcup F_{j,M}} |\nabla \phi(\eta_x)| \dd x \leq \frac{C}{\sqrt{M}}.
\]
This completes the proof of \eqref{eq.claim2}.
\end{proof}

We have proved claims \eqref{eq.claim1} and \eqref{eq.claim2},
so the proof of the part concerning good $\epsilon$'s is now complete. Let us proceed to tackle bad $\epsilon$'s.

\paragraph*{Bad $\epsilon$'s.}

The goal here is to show that
\begin{equation}\label{eq.badgoal}
\lim_{M\to\infty} \p{ \bigcup_{\textrm{bad $\epsilon$'s}} A_\epsilon } = 0,
\end{equation}
again, with the required uniformity.
Obviously it suffices to show that $\sum_{\textrm{bad $\epsilon$'s}} \p{A_\epsilon} \xrightarrow[M\to\infty] {} 0$. Let $\mc{F}_j$ be the set of those bad $\epsilon$'s for which the cardinality of the set $\left\{ \epsilon_i, i = 1, \ldots, M \right\}$ equals $j$. Observe that $\sharp \mc{F}_j \leq j^M$. With the aid of Lemma \ref{lm.matrixrank} we will show that
\begin{equation}\label{eq.badepsilonfinal}
	\forall \epsilon \in \mc{F}_j \ \p{A_\epsilon} \leq C \cdot 2^{-M(1 + \lfloor \log_2 j \rfloor)}\cdot O\big((\max_j |I_j|)^{1 + \lfloor \log_2 j \rfloor}\big), \quad \textrm{when $\max_j |I_j| \longrightarrow 0$}.
\end{equation}
This will finish the proof, for
\begin{equation}\label{eq.argumentofbadepsilons}
	\begin{split}
		\sum_{\textrm{bad $\epsilon$'s}} \p{A_\epsilon} &\leq C\cdot O(\max_j |I_j|)\sum_{j=1}^{2^k-1} j^M\cdot 2^{-M(1 + \lfloor \log_2 j \rfloor)} \\
	&= C\cdot O(\max_j |I_j|)\sum_{j=1}^{2^k-1} 2^{-M(1 + \lfloor \log_2 j \rfloor - \log_2 j)} \xrightarrow[M\to\infty]{} 0.
	\end{split}
\end{equation}

For the proof of \eqref{eq.badepsilonfinal} fix $\epsilon \in \mc{F}_j$. We have seen that
\[
	\p{A_\epsilon} = \p{ \sum \psi(\alpha, \ell_\alpha) \modulo 2\pi \in J_1 \times \ldots \times J_k }
\]
and we know that there are exactly $j$ numbers $\ell_\alpha$ which are nonzero, say those which correspond to vectors $\alpha^1, \ldots, \alpha^j \in \{1,2\}^k$. Denote $\Psi^j = \psi(\alpha^i, \ell_{\alpha^i})$, $i = 1, \ldots, j$ and consider the random vector $S_j = \Psi^1 + \ldots + \Psi^j$ in $\R^k$. As in the proof of Claim \eqref{eq.claim1} we observe that $S_j$ is a linear image of the vector $(X_1, Y_1, \ldots, X_j, Y_j)$. This mapping is given by the matrix $A = [a_{uv}]$ where
\[
	a_{2i-1, v} = \begin{cases} 1, & \alpha^i_v = 1 \\ 0, & \alpha^i_v = 2 \end{cases}, \qquad a_{2i, v} = \begin{cases} 0, & \alpha^i_v = 1 \\ 1, & \alpha^i_v = 2 \end{cases}.
\]
By Lemma \ref{lm.density} we obtain
\begin{equation}\label{eq.forcor2}
	\p{S_j \modulo 2\pi \in J_1\times \ldots \times J_k} \leq C\max\big(|J_{i_1}|\cdot\ldots\cdot |J_{i_r}|\big) = C\cdot O(\max_j |I_j|)\cdot 2^{-Mr},
\end{equation}
where $r = \textrm{rank} A$. The number $r$ does not change if we replace the $2i$-th column of $A$ with the vector $e$ with $1$ at each its entry, as the sum of $2i-1$-th and $2i$-th columns is just $e$. Taking only the columns $1,2,3, 5, \ldots, 2j-1$ we get the matrix $B$ which has the same rank as $A$. It has $j+1$ columns and fulfils the assumptions of Lemma \ref{lm.matrixrank}. Thus $r \geq \min (1 + \lfloor \log_2 (1 + j) \rfloor, k)$ and when $j < 2^k - 1$ this minimum equals $1 + \lfloor \log_2 (1 + j) \rfloor \geq 1 + \lfloor \log_2 j \rfloor$. If $j = 2^k - 1$ in the matrix $A$ there must be two identical columns, one with even, say $2u$, and one with odd, say $2v-1$ index, which means that the $u$-th and the $v$-th column of $B$ add up to $e$, so the $v$-th column may be erased and the rank of $B$ does not change. Therefore we apply the lemma to the matrix $B$ with erased the $v$-th column which is of size $k\times j$ and get again $r \geq \min (1 + \lfloor \log_2
  j \rfloor, k) = 1 + \lfloor \log_2 j \rfloor$. This completes the proof of \eqref{eq.badepsilonfinal}.

\paragraph*{Pairs of good $\epsilon$'s, i.e. the proof of \eqref{eq.thingtwo}.}

We denote by $\Theta_i(\epsilon)$ the random vector $(\theta_i^{\epsilon_i^1}, \ldots, \theta_i^{\epsilon_i^k})$. By the definition of $A_\epsilon$ we may write
\begin{equation}\label{eq.forcor2'}
	A_\epsilon \cap A_{\widetilde{\epsilon}} = \left\{ \sum_{i=1}^M \begin{bmatrix} \Theta_i(\epsilon) \\ \Theta_i(\widetilde{\epsilon}) \end{bmatrix}	\modulo 2\pi \in \begin{array}{r} J_1 \times \ldots \times J_k \\ J_1 \times \ldots \times J_k \end{array} \right\}.
\end{equation}
Since the intervals $J_u$ and $J_v$ are disjoint for $u \neq v$, we may restrict ourselves to those $\epsilon$ and $\widetilde{\epsilon}$ for which $\epsilon^u \neq \widetilde{\epsilon}^v$ whenever $u \neq v$, $u, v = 1, \ldots, k$ as otherwise the event $A_\epsilon \cap A_{\widetilde{\epsilon}}$ is impossible. However it might happen that $\epsilon^u = \widetilde{\epsilon}^u$. Let us count for how many $u$'s it takes place, i.e. given $s \in \{1, \ldots, k\}$ let $\mc{P}_s$ be the set of all considered unordered pairs $\{\epsilon, \widetilde{\epsilon}\}$ for which there are exactly $k-s$ indices $1 \leq u_1 < \ldots < u_{k-s} \leq k$ such that $\epsilon^{u_j} = \widetilde{\epsilon}^{u_j}$, $j = 1, \ldots, k-s$. The value $s = 0$ is excluded as $\epsilon \neq \widetilde{\epsilon}$. We have
\[
	\sum_{\epsilon \neq \widetilde{\epsilon}} \p{A_\epsilon \cap A_{\widetilde{\epsilon}}} = \sum_{s = 1}^k \sum_{\{\epsilon, \widetilde{\epsilon}\} \in \mc{P}_s} \p{A_\epsilon \cap A_{\widetilde{\epsilon}}}.
\]
Thus we fix $s$ and prove that $\limsup_{\max_j |I_j| \to 0}\limsup_{M\to\infty} \frac{1}{\prod |I_j|}\sum_{\{\epsilon, \widetilde{\epsilon}\} \in \mc{P}_s} \p{A_\epsilon \cap A_{\widetilde{\epsilon}}} = 0$. There are two cases. A pair $\{\epsilon, \widetilde{\epsilon}\} \in \mc{P}_s$ can be \emph{good} which means $\sharp \left\{ \left[ \begin{smallmatrix} \epsilon_i \\ \widetilde{\epsilon}_i \end{smallmatrix} \right], \ i = 1, \ldots, M \right\} \geq 2^{k+s}$, or, otherwise we call it \emph{bad}. We obtain a decomposition $\mc{P}_s = \mc{P}_s^{\textrm{good}} \cup \mc{P}_s^{\textrm{bad}}$. Now for a good pair, applying the reasoning already used for bad $\epsilon$'s, i.e. combining lemmas \ref{lm.density} and \ref{lm.matrixrank}, we get the estimate
\[
	\p{A_\epsilon \cap A_{\widetilde{\epsilon}}} \leq C|J_1|\cdot \ldots \cdot |J_k| \big( \max_{j=1,\ldots, k} |J_j| \big)^s = \frac{C}{2^{(k+s)M}}\bigg(\prod |I_j|\bigg) \big( \max_j |I_j| \big)^s.
\]
But $\sharp \mc{P}_s^{\textrm{good}} \leq \sharp \mc{P}_s \leq {k \choose s}\cdot 2^{(k+s)M}$, so
\[
	\limsup_{\max_j |I_j| \to 0}\limsup_{M\to\infty} \frac{1}{\prod |I_j|}\sum_{\{\epsilon, \widetilde{\epsilon}\} \in \mc{P}_s^{\textrm{good}}} \p{A_\epsilon \cap A_{\widetilde{\epsilon}}} = 0.
\]

For a bad pair $\{\epsilon, \widetilde{\epsilon}\}$ we know that there are $k + s$ different rows and at most $2^{k+s} - 1$ different columns in the matrix $\left[ \begin{smallmatrix} \epsilon \\ \widetilde{\epsilon} \end{smallmatrix} \right]$. Hence we repeat the argument of the part concerning bad $\epsilon$'s. Namely, first exactly in the same manner as in that part we use Lemma \ref{lm.matrixrank} in order to establish an appropriate inequality in the spirit of \eqref{eq.badepsilonfinal}. Then we follow the estimate of \eqref{eq.argumentofbadepsilons} and conclude that
\[
	\lim_{M\to\infty} \sum_{\{\epsilon, \widetilde{\epsilon}\} \in \mc{P}_s^{\textrm{bad}}} \p{A_\epsilon \cap A_{\widetilde{\epsilon}}} = 0.
\]
This finishes the proof of Theorem
\ref{thm.cue^}.
\end{proof}

\begin{proof}[Proof of Corollary \ref{cor.cue^N}]
 Fix $\Delta$ small so that $s/\Delta$
 is an integer and
 divide the interval $[0,s]$ into  consecutive intervals of length
 $\Delta$, denoted $I_i$. Let
 $Z_i=\tau_M(I_i)$ and $\bar Z_i={\bf 1}_{\{Z_i>0\}}$. 
 Of course, $\tau_M([0,s])=\sum_{i=1}^{s/\Delta} Z_i$.
 Our goal is to show that $\tau_M([0,s])$ becomes Poissonian in
the limit of large $M$, from which the statement of the corollary
follows immediately.

 The proof of Theorem \ref{thm.cue^} yields the following facts.
 There exist a sequence 
 $\delta_{M,\Delta,k}$ with
 $$\limsup_{\Delta\to 0}\limsup_{M\to\infty} \delta_{M,\Delta,k} =0\,,$$
 and a universal constant $C$
 such that the following hold.
 \begin{eqnarray}
   \p{Z_i\neq \bar Z_i}&\leq & C \Delta^2\,, \label{eq:030312a}\\
   \E\left(\prod_{i\in J_k}
   \bar Z_i\right)&=& \Delta^k(1+O(\delta_{M,\Delta,k}))
   \label{eq:030312b}
 \end{eqnarray}
 where $J_k$ denotes an arbitrary subset of $k$ distinct integers in
   $\{1,\ldots,s/\Delta\}$. 
   
   Indeed, to justify \eqref{eq:030312a} notice that
   \[
   \p{Z_i \neq \bar Z_i} = \p{\tau_M(I_i) \geq 2} \leq \p{\bigcup_{\epsilon \neq \widetilde{\epsilon}} A_\epsilon \cap A_{\widetilde{\epsilon}} },
   \]
   where $\epsilon, \widetilde{\epsilon} \in \{1,2\}^M$ and $A_\epsilon$ is the event that there is an eigenphase described by $\epsilon$ in the interval $I_i$ (see \eqref{eq.defA_epsilon}). The probability of the event $A_\epsilon \cap A_{\widetilde{\epsilon}}$ can be estimated by $C\cdot 2^{-2M}\cdot |I_i|^2 = 2^{-2M}\cdot C\Delta^2$. To see this, recall \eqref{eq.forcor2'} and follow the same argument which led to estimate \eqref{eq.forcor2} (in this case the relevant matrix has the rank no less than $2$). It suffices, as $\p{\bigcup_{\epsilon \neq \widetilde{\epsilon}} A_\epsilon \cap A_{\widetilde{\epsilon}} } \leq {2^M \choose 2}\cdot 2^{-2M}\cdot C\Delta^2 \leq C\Delta^2$. For \eqref{eq:030312b}, observe that
   \[
	   \E \prod_{i \in J_k} \bar Z_i = \E \1_{\{Z_i > 0, i \in J_k\}} = \p{ \tau_M(I_i) > 0, i \in J_k},
   \]
   and apply Theorem \ref{thm.cue^} (with its uniformity statement).
   
   Let $Y_i$ be i.i.d. Bernoulli random variables with
   $\p{Y_1=1}=1-\p{Y_1=0}=\Delta$. By \eqref{eq:030312b} we have that 
   for any integer $\ell$,
   $$
   \limsup_{\Delta\to 0}
   \limsup_{M\to\infty}\left| \E\left(\sum_{i=1}^{s/\Delta} \bar Z_i\right)^\ell 
   - \E\left(\sum_{i=1}^{s/\Delta} Y_i\right)^\ell\right| 
   =0\,.$$
   Since $\sum_{i=1}^{s/\Delta} Y_i$ converges to a Poisson
random variable of parameter $s$ as $\Delta\to 0$, it follows that
   $\sum_{i=1}^{s/\Delta} \bar Z_i$
   converges in distribution to a Poisson variable of parameter $s$, 
   when first $M\to\infty$ and then $\Delta\to 0$. 
   On the other hand,
   using \eqref{eq:030312a} we have that
   $$\p{\sum_{i=1}^{s/\Delta} \bar Z_i\neq 
   \sum_{i=1}^{s/\Delta}  Z_i} \leq Cs\Delta\,,$$
   and therefore, one concludes that also 
   $\sum_{i=1}^{s/\Delta}  Z_i$
   converges in distribution to a Poisson variable of parameter $s$, when first
   $M\to\infty$ and then $\Delta\to 0$.
This yields the corollary.
\end{proof}

\section*{Acknowledgements.}

TT was partially supported by NCN Grant no. 2011/01/N/ST1/05960.
MS, MK and KZ were supported by
the SFB Transregio-12 project der Deutschen Forschungsgemeinschaft
and a grant financed by the Polish National Science Centre under the
contract number  DEC-2011/01/M/ST2/00379.
OZ was supported by 
NSF grant DMS-0804133 and by a grant from the Israel Science Foundation.

Part of the work was done while the first named author was 
participating in The Kupcinet-Getz International Summer 
Science School at the Weizmann Institute of Science in Rehovot, Israel.
We are grateful to the WIS for financial support making this possible.

We thank S. Jain and A. Pandey for comments related to 
the discussion in
\cite{Jain}.
Finally, we thank Dima Gourevitch for both providing and
allowing us to use his proof of
Lemma \ref{lm.matrixrank}.


\begin{thebibliography}{9}

    \bibitem{AGZ} G. Anderson, A. Guionnet and O. Zeitouni 
{\sl  An introduction to random matrices}
\emph{Cambridge Studies in Advanced Mathematics \textbf{118}
 (Cambridge: Cambridge University Press)} (2010)

 \bibitem{BR} R. N. Bhattacharya and R. Ranga Rao {\sl Normal approximation and asymptotic expansions. Classics in Applied Mathematics; 64} \emph{Philadelphia : Society for Industrial and Applied Mathematics, cop. (2010)}

 \bibitem{Dys} F. J. Dyson 
{\sl Statistical Theory of the Energy Levels of Complex Systems}
\emph{J. Math. Phys. \textbf{3} 140--156} (1962)

\bibitem{FOR} 
P. J. Forrester and E. M. Rains 
{\sl Interrelationships between orthogonal, unitary
and symplectic matrix ensembles}. In {\it
Random Matrix Models and their Applications},
Math. Sci. Res. Inst. Publ. {\bf 40}, 171--207. Cambridge, Cambridge University
Press (2001)

\bibitem{GKP} R. L. Graham, D. E. Knuth and O. Patashnik {\sl Concrete Mathematics} \emph{Addison-Wesley } (1998)

\bibitem{H06}  F. Haake  {\sl Quantum Signatures of Chaos} III ed.
\emph{(Berlin: Springer)} (2006)

\bibitem{HLW06} P. Hayden, D. W. Leung and A. Winter
{\sl Aspects of generic entanglement}
\emph{  Commun. Math. Phys. {\bf 265}  95} (2006)

\bibitem{ZZ} Zhengyan Lin and Zhidong Bai {\sl Probability inequalities.}
{\it Science Press Beijing, Beijing; Springer, Heidelberg} (2010)

\bibitem{Me04} M. L. Mehta {\sl Random matrices} Third edition \emph{Amsterdam: Elsevier/Academic Press} (2004)

\bibitem{Jain} N. Rosenzweig and C. E. Porter,
  Repulsion of energy levels in complex atomic spectra,
  {\it Phys. Review} {\bf 120}, 1698--1714 (1960)
  
\bibitem{S99}  H-J. St{\"o}ckman 
{\sl Quantum Chaos} \emph{(Cambridge: Cambridge University Press)} (1999)




%






\bibitem{ZS01} K. {\.Z}yczkowski and H-J. Sommers 
{\sl Induced measures in the space of mixed quantum states}
\emph{ J. Phys. {\bf A 34} 7111 } (2001)












\end{thebibliography}
\end{document}